\newtheorem{theorem}{Theorem}
\newtheorem{lemma}[theorem]{Lemma}
\newtheorem{proposition}[theorem]{Proposition}
\newtheorem{definition}[theorem]{Definition}
\theoremstyle{remark}
\newtheorem*{remark}{Remark}
\newtheorem*{remarks}{Remarks}
\newtheorem*{example}{Example}
\numberwithin{theorem}{section} \numberwithin{equation}{section}
\newcommand{\mfp}{\mathfrak{p}}
\newcommand{\mfq}{\mathfrak{q}}
\newcommand{\mfa}{\mathfrak{a}}
\newcommand{\calP}{\mathcal{P}}
\newcommand{\mfb}{\mathfrak{b}}
\newcommand{\calO}{\mathcal{O}}
\newcommand{\C}{\mathbb{C}}
\newcommand{\calS}{\mathcal{S}}
\newcommand{\Z}{\mathbb{Z}}
\newcommand{\N}{\mathbb{N}}
\newcommand{\textmod}{{\text {\rm mod}}}
\newcommand{\res}{{\text {\rm res}}}
\newcommand{\rcn}{h_{\mfq}}
\newcommand{\sump}{\sideset{}{'}\sum}
\begin{document}
\title[Bubbles of Congruent Primes]
{Bubbles of Congruent Primes}

\author{Frank Thorne}
\address{Department of Mathematics, University of South Carolina,
1523 Greene Street, Columbia, SC 29208}
\email{thorne@math.sc.edu}
\subjclass[2000] {11N13, 11R44}

\begin{abstract}

In \cite{shiu}, Shiu proved that if $a$ and $q$ are arbitrary coprime
integers, then there exist arbitrarily long strings of consecutive primes
which are all congruent to $a$ modulo $q$. We generalize Shiu's theorem
to imaginary quadratic fields, where we prove the existence
of ``bubbles'' containing arbitrarily many primes which are all, up to 
units, congruent to $a$ modulo $q$.
\end{abstract}

\maketitle

\section{Introduction and Statement of Results}
In 1997, Shiu \cite{shiu} proved that if
$a, q$, and $k$ are arbitrary integers with $(a, q) = 1$,
there exists a string of $k$ consecutive primes
$$p_{n + 1} \equiv p_{n + 2} \equiv \dots \equiv p_{n + k} \equiv a \ (\textmod \ q).$$
(Here $p_n$ denotes the $n$th prime.) Furthermore, if $k$ is sufficiently
large in terms of $q$, these primes can be chosen to
satisfy the bound\footnote{
In Shiu's statement of his results,
the initial $1/\phi(q)$ in (\ref{shiu_bound}) and
the requirement that $k$ be large are omitted, and the
implied constant in (\ref{shiu_bound}) is allowed to depend on $q$. A careful
reading of his proof shows that the dependence on $q$ may be controlled as stated.}
\begin{equation}\label{shiu_bound}
\frac{1}{\phi(q)} \bigg( \frac{\log \log p_{n + 1} \log \log \log \log p_{n + 1}}{(\log
\log \log p_{n + 1})^2} \bigg)^{1/\phi(q)} \ll k,
\end{equation}
uniformly in $q$.

In this paper we prove an analogous statement for imaginary quadratic fields.
If $K$ is such a field,
then the ring of integers $\mathcal{O}_K$ forms a lattice in
$\mathbb{C}$, and the primes of $\mathcal{O}_K$ can be naturally visualized as
lattice points. In this setting one may ask 
whether there are clumps of primes, all of which lie in a fixed arithmetic progression.
We prove that this is indeed the case, up to multiplication by units: 
\begin{theorem}\label{thm_shiu}
Suppose $K$ is an imaginary quadratic field, $k$ is
a positive integer, and $a$ and $q$ are elements of $\calO_K$ with
$q \neq 2$ and $(a, q) = 1$. Then there exists a ``bubble''
\begin{equation}\label{bubble}
B(r, x_0) := \{x \in \C : |x - x_0| < r \}
\end{equation}
with at least $k$ primes, such that all the primes in this bubble
are congruent to $ua$ modulo $q$
for units $u \in \calO_K$.
Furthermore, for $k$ sufficiently large (in terms of $q$ and $K$), $x_0$ can be chosen to satisfy
\begin{equation}\label{eqn_shiu_bound}
\frac{1}{\phi_K(q)} \bigg( \frac{\log \log |x_0| \log \log \log \log |x_0|}
{(\log \log \log |x_0|)^2 }\bigg)^{\omega_K/h_K \phi_K(q)} \ll k.
\end{equation}
The implied constant is absolute.
\end{theorem}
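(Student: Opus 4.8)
The plan is to transplant Shiu's argument — a Maier matrix fed by a covering construction — into the lattice $\calO_K \subset \C$, with primes in arithmetic progressions replaced by prime elements (generators of principal prime ideals) lying in prescribed ray classes. Since $\C$ carries no order, ``consecutive'' has no meaning and it suffices to produce, for $|x_0|$ arbitrarily large, a single bubble $B(r,x_0)$ of the largest radius the construction permits such that (i) every prime element of $\calO_K$ in $B(r,x_0)$ is $\equiv ua \pmod q$ for some unit $u$, and (ii) $B(r,x_0)$ contains at least $k$ prime elements.

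For the covering, fix a large parameter $Y$ and put $\mfm = q\prod_{\mathrm{N}\mfp \le Y}\mfp$. The centre $x_0$ is chosen in a prescribed class modulo $\mfm$, with $x_0 \equiv a \pmod q$, so that every lattice point in $B(r,x_0)$ whose class modulo $q$ is \emph{not} a unit multiple of $[a]$ is divisible by some prime ideal of norm $\le Y$, hence composite (once $|x_0|$ is large). There are $O_{q,K}(r^2)$ such ``bad'' points; assigning to each a distinct prime ideal of norm in $(\mathrm{N}q,Y]$ — of which there are $\gg Y/\log Y$ by the prime ideal theorem — imposes one congruence apiece, and the Chinese remainder theorem over $\mfm$ produces $x_0 \bmod \mfm$. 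This pins $r$ at $r^2 \asymp_{q,K} Y/\log Y$, and, by the Mertens estimate $\prod_{\mathrm{N}\mfp\le Y}(1-\mathrm{N}\mfp^{-1})^{-1}\sim c_K\log Y$, the surviving ``good'' residues modulo $\mfm$ (coprime to $\mfm$ and $\equiv ua \pmod q$ for a unit) number about $r^2/\log Y \asymp Y/(\log Y)^2$.

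Now run the Maier matrix: let the centre range over the translates $x_0 = x_{00} + \mu$, $\mu\in\mfm$, $\mathrm{N}\mu \le X$ (the rows), with columns indexed by the good residues. The entries of a fixed column form a residue class modulo $\mfm$ inside a ball of norm $\le X$; the count of prime elements in it is bounded above by a Brun--Titchmarsh estimate for ideals, and, after summation over the columns, below by the prime ideal theorem in ray classes, with the sieved structure supplying the Maier gain factor $\mathrm{N}\mfm/\phi_K(\mfm)\asymp\log Y$. Pigeonholing gives a row — a bubble $B(r,x_{00}+\mu)$ — with $\gg_K k$ prime elements, all in the good class, once $X$ is large enough in terms of $Y$; balancing $Y$ against $X$ as forced by the admissible uniformity in the prime ideal theorem, and feeding in $r^2\asymp Y/\log Y$, yields (\ref{eqn_shiu_bound}). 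The exponent $\omega_K/(h_K\phi_K(q))$ is simply the density, among all prime elements, of those $\equiv ua \pmod q$: a prime ideal is principal with density $1/h_K$, a principal $(\pi)$ has $[\pi]$ equidistributed over the $\phi_K(q)/\omega_K$ orbits of $(\calO_K/q)^\times$ under units, and each contributes $\omega_K$ prime elements; the hypothesis $q\neq 2$ is precisely what makes the unit group inject into $(\calO_K/q)^\times$, keeping this count and the ray-class bookkeeping clean (for $q=2$ every unit is $\equiv 1$ and the statement degenerates).

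The hard part is the analytic heart of Step three: one needs the prime ideal theorem for $\calO_K$ in ray classes modulo $\mfm$ with uniformity in $\mfm$ — a Siegel--Walfisz/Landau--Page theorem for Hecke $L$-functions, including control of a possible exceptional real zero — and one must thread it through Shiu's delicate combinatorial bookkeeping so that the loss incurred in taking $X$ large does not swamp the gain from the many good residues; that the exceptional-zero input is ineffective is what forces the constant in (\ref{eqn_shiu_bound}) to be merely absolute. The chief feature that is genuinely new relative to Shiu is geometric rather than analytic: the covering and the counting are carried out for round bubbles inside a two-dimensional lattice, so one needs the standard but slightly fussy estimates for lattice points in a disk and their equidistribution in residue classes, uniformly in the modulus.
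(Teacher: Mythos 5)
Your general frame --- ray classes, uniform prime-ideal counts via Hecke $L$-functions with exceptional-zero control, and a Maier matrix whose rows are $\mfm$-translates --- is right, but the covering mechanism you propose is the wrong one and it fails quantitatively. You suggest making \emph{every} bad residue in the bubble composite by assigning a distinct prime ideal of norm in $(\N q, Y]$ to each of the $\asymp r^2$ bad lattice points and solving the resulting system by the Chinese remainder theorem; this is an Erd\H{o}s--Rankin-style greedy cover and, as you note, it forces $r^2 \asymp Y/\log Y$. But then the Maier gain cancels exactly: with $\mfm = q\prod_{\N\mfp\le Y}\mfp$ the number of good columns is $\asymp r^2/\log Y$, the prime density in a column is $\asymp \N\mfm/(\phi_K(\mfm)\log X)$ with $\N\mfm/\phi_K(\mfm) \asymp \log Y$, and the pigeonhole allows $\log X \gg D\log\N\mfm \asymp DY$, so a typical row carries $\asymp (r^2/\log Y)\cdot\log Y\cdot 1/(DY) \asymp 1/(D\log Y)$ primes, which is bounded (indeed shrinking), not $\to\infty$. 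The point of Shiu's construction is precisely to avoid the greedy cover: the modulus $\mfQ$ is built from a \emph{structured} set $\mathcal{P}$ containing all small primes $\not\equiv 1, a\ (\textmod\ \mfq)$, medium-to-large primes $\equiv 1$, and small-to-medium primes $\equiv a$, while deliberately \emph{omitting} small primes $\equiv 1$. Then a residue $b$ of norm $\le yz$ coprime to $Q$ is typically a prime $\equiv a$ times a product of small primes $\equiv 1$, hence automatically in the good class, and the bubble has area $yz$ with a free parameter $z\to\infty$ --- vastly larger than what a CRT cover permits. The count of surviving residues that are good versus bad rests on the Landau/Raikov-type estimate of Lemma \ref{lemma_1_mod_m} (ideals all of whose prime factors are $\equiv 1\ (\textmod\ \mfq)$ number $\sim C x(\log x)^{-1+1/\rcn}$) together with the smooth-ideal bound of Lemma \ref{lemma_Psi}. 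This is also where the exponent $\omega_K/(h_K\phi_K(q)) = 1/\rcn$ in \eqref{eqn_shiu_bound} actually comes from --- it is the Landau exponent, not, as you assert, the density of good primes among all primes.

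There is a second missing idea. Because the structured sieve is not exhaustive, the resulting bubble still contains some bad primes, and one must extract a sub-bubble free of them. In $\mathbb{Z}$ this is trivial --- between two consecutive bad primes lies a run of good ones --- but in the plane it is not, and it is the genuinely new geometric ingredient here: Proposition \ref{good_bad_points}, which uses a Delaunay triangulation of the bad points augmented by a fixed $7$-gon to produce a circle containing $> g/(2b+12)$ good points and no bad ones. Your CRT cover happened to sidestep this by killing bad primes outright; once you replace it by Shiu's structured sieve, as you must, the geometric lemma becomes indispensable, and your write-up should address it explicitly rather than folding everything into ``pigeonholing gives a row.''
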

Here $\omega_K$ denotes the number of units in $\calO_K$, $h_K$
is the class number of $K$, and
$\phi_K(q) := |(\calO_K / (q))^\times|.$

\begin{remarks} 
The unit $u$ will not necessarily be the same for each prime in the bubble
(\ref{bubble}). It would be desirable to obtain a version of Theorem
\ref{thm_shiu} where each prime is congruent to $a$ modulo $q$, without the
ambiguity involving units. Unfortunately, this
ambiguity appears to be unavoidable given our 
methods of proof.

The restriction that $q \neq 2$ is not severe; to obtain
prime bubbles modulo 2 we may take (for example) $q =4$.
For the reason behind this restriction, see Lemma \ref{units_lemma}.
\end{remarks}

\begin{example} Let $K = \mathbb{Q}(i)$, $q = 5 + i$, and $a = 1$.
A computer search reveals that the ball of radius $\sqrt{7.5}$ centered
at $2 + 17i$ contains three primes, all of which are congruent to $\pm 1$
or $\pm i$ modulo $q$. Similarly the ball of radius $\sqrt{23.5}$ centered
at $59 + 779i$ contains six primes, all of which are congruent to $\pm 1$
or $\pm i$. Theorem \ref{thm_shiu} establishes the existence of infinitely
many such balls, with $\omega_K / \phi_K(q) = 1/3.$
\end{example}

The proof of Theorem \ref{thm_shiu} is an adaptation of 
Shiu's original proof \cite{shiu}, which we now summarize.\footnote{
We describe a simplified version of Shiu's argument which proves \eqref{shiu_bound} for all $a$;
Shiu proves a better bound than \eqref{shiu_bound} for certain moduli $a \ (\textmod \ q)$.}
Given $a$ and $q$ with $(a, q) = 1$, Shiu constructs a modulus $Q(y)$ such that most
integers in an interval $[1, yz]$ which are coprime to $Q(y)$ are congruent to $a$ modulo $q$.
He then constructs a ``Maier matrix'', the rows of which are short intervals, and the columns of
which are arithmetic progressions modulo $Q(y)$. By an appropriate version of the prime number
theorem for arithmetic progressions \eqref{pnt_gallagher}, most primes in the matrix are congruent to $a \ (\textmod \ q)$.
A counting argument establishes the existence of strings of congruent primes.

In adapting Shiu's proof to imaginary quadratic fields we encounter two difficulties. The first is
the failure of unique factorization. Shiu's argument relies on the unique factorization
of positive integers into positive primes, and we encounter obstructions from both the unit group
(there is no analogue of ``positive'') and the class group. The obstruction from the unit group
seems unavoidable, so we incorporated it into our results. We can handle the
class group, however, and we prove an analogue of \eqref{pnt_gallagher} for principal prime ideals. We introduce 
an {\itshape ad hoc} definition of congruences on ideals; namely, we write $\mfp \equiv a \ (\textmod \ q)$ if
$\mfp$ is principal and any generator is congruent to $a \ (\textmod \ q)$. With this definition, 
we prove that there are sufficiently many prime ideals $\equiv a \ (\textmod \ q)$ to make Shiu's
argument work.

The second difficulty is geometric. Shiu's construction exhibits a string of primes, {\itshape almost} 
all of which are congruent to $a \ (\textmod \ q)$, after which finding a substring of primes $\equiv a \ (\textmod \ q)$ is trivial.
The two-dimensional analogue of this construction is no longer trivial: we find a ``bubble''
in the complex plane containing many ``good'' primes $\equiv ua \ (\textmod \ q)$ and few bad primes, 
and we want a smaller bubble containing only good primes. To obtain this,
we count bad primes in larger bubbles than good primes,
obtaining concentric bubbles in the complex plane. A combinational geometry argument 
(see Section \ref{sec_geometry}) then
allows us to find a bubble containing only good primes.

Generally speaking, the results of this paper indicate that the Maier matrix method ``works''
for imaginary quadratic fields (at least), and we believe that it should be possible to prove
the existence of various irregularities in the distribution of the primes of $\calO_K$,
in analogy with results for $\mathbb{Z}$ obtained by Maier \cite{maier_short_intervals},
Granville and Soundararajan \cite{GS}, and others. (We refer to
 the survey article of Granville \cite{granville} for an interesting
overview of the method and additional related results.) This does present other difficulties however, and
in any case we have not pursued this further here.

The outline of the paper is as follows. In Section \ref{sec_zero_density}
we prove several results related to the distribution of prime ideals in arithmetic progressions.
The most important of these is
a version of the prime number theorem for arithmetic progressions in quadratic
fields (Theorem \ref{pnt_ap_good}), and we closely follow Gallagher \cite{gallagher} for the proof.
In Section
\ref{sec_geometry} we present the combinatorial geometry argument which allows to find bubbles
containing exclusively good primes. We conclude with the proof of Theorem
\ref{thm_shiu} in Section \ref{sec_proof_shiu}.
\\
\\
\bf Setup and notation\rm. We assume $K$ is an imaginary quadratic
field with a fixed embedding $K \rightarrow \mathbb{C}$,
with class number $h_K$ and $\# \calO_K^{\times}
= \omega = \omega_K \in \{2, 4, 6\}$.
Any $K$-dependence of implicit constants occuring in our results
will be explicitly noted.

We will write $\mfq = (q)$ throughout, and where it does not lead to
ambiguity we will refer to $\mfq$ and $q$ interchangeably.
We assume that the units of $\calO_K$ all represent distinct residue classes mod $\mfq$; 
by Lemma \ref{units_lemma}, this only excludes
three choices for $\mfq$. We further assume that the units do not
represent all reduced residue classes modulo $\mfq$; 
if this happens then Theorem \ref{thm_shiu} is trivial.

As $K$ will be fixed, we will simply write $\phi(q)$ (or $\phi(\mfq)$) 
for $\phi_K(q) := |(\calO_K / (q))^*|.$ We will also write $\rcn$
for $h_K \phi(q)/\omega$, the size of the ray class group.

Our methods oblige us to define congruences on ideals.
For an ideal $\mathfrak{b}$ of $\calO_K$ and $a, q \in \calO_K$, we say that
$\mathfrak{b} \equiv a \ (\textmod \ q)$ if $\mathfrak{b}$ is principal and
$b \equiv a \ (\textmod \ q)$ for any $b$ for which $\mathfrak{b} = (b)$.
If $\mathfrak{b} \equiv a$,
then $\mathfrak{b} \equiv ua$ for any unit $u \in \calO_K^{\times}$. Equivalently, we see
that $\mathfrak{b} \equiv a \ (\textmod \ q)$ if 
$\mathfrak{b}$ and $(a)$ represent the same class in the ray
class group $H^{(q)}$. For nonprincipal $\mathfrak{b}$ we say that
$\mathfrak{b} \not \equiv a \ (\textmod \ q)$ for any $a$.

\section*{Acknowledgements}
I thank Bob Hough, Jorge Jim\'enez-Urroz, and an anonymous referee for useful advice and suggestions.
In particular, I thank Hough for suggesting an improvement to a previous version of 
Proposition \ref{good_bad_points}.

This work was part of my graduate thesis; I thank my advisor Ken Ono for his many useful suggestions, as well as the NSF
for financial support.

\section{Prime ideals in arithmetic progressions}\label{sec_zero_density}

One standard ingredient in the Maier matrix method is a theorem of Gallagher
(\cite{gallagher}; see also \cite{maier_large_gaps}, Lemma 2), who proved
that
\begin{equation}\label{pnt_gallagher}
\pi(x; q, a) = (1 + o_{D}(1)) \frac{x}{\phi(q) \log x},
\end{equation}
uniformly in $x \gg q^D$, for a suitably large (infinite) set of moduli $q$. This result
serves as a substitute for the Riemann hypothesis, and allows one to count
the number of primes in Maier matrices in different arithmetic progressions.

The main goal of this section is generalize this result to 
imaginary quadratic fields. We will work with prime ideals rather than prime elements, to
preserve unique factorization, and it will be necessary (if a bit unnatural) to describe
the distribution of prime ideals in congruence classes.

\begin{definition}
We write $\pi_1(x; q, a)$ for the number of principal prime ideals $\mathfrak{p}$
of norm $\leq x$, such that $p \equiv a \ (\textmod \ q)$ for some generator
$p$ of $\mathfrak{p}$.
\end{definition}

We will estimate $\pi_1(x; q, a)$ using analytic techniques applied to Hecke $L$-functions.
We first recall the necessary definitions and terminology.

In place of $\calO_K/\mfq$ we begin with the
\itshape ray class group \upshape modulo $\mfq$
\begin{equation}\label{def_hq}
H^{\mfq} := J^{\mfq}/P^{\mfq},
\end{equation}
where $J^{\mfq}$ is the group of all fractional ideals coprime to $\mfq$,
and $P^{\mfq}$ is the group of principal fractional ideals $(a) = (b)(c)^{-1}$
with $b, c \in \calO_K$ and $b \equiv c \equiv 1 \mod \mfq$.
If we write $J^{\mfq}_1$ for the group of principal fractional
ideals coprime to $\mfq$, then
$J^{\mfq}_1 / P^{\mfq}$ is in one-to-one correspondence with the set of sets of
reduced residue classes modulo $\mfq$
\begin{equation}\label{count_residue_classes}
\{ua: (a, \mfq) = 1, u \in \calO_K^{\times} \},
\end{equation}
where $a$ is a fixed in each set and $u$ ranges over all units of $\calO_K$.
The proof of Theorem \ref{thm_shiu} will exhibit bubbles of
prime elements $p$, such that the ideals $(p)$ all lie in a
fixed class in $J^{\mfq}_1 / P^{\mfq}$.

Suppose henceforth that $\mfq \not \in \big\{ (2), \big(\frac{-3 \pm \sqrt{-3}}{2}\big) \big\}$ and $\phi(\mfq) > 1$.
Then the size of the ray class group is given by the following simple formula.
\begin{lemma}\label{units_lemma}
If  $\mfq \not \in \big\{ (2), \big(\frac{-3 \pm \sqrt{-3}}{2}\big) \big\}$ and $\phi(\mfq) > 1$, then we have
\begin{equation}\label{eqn_units}
\rcn := |H^{\mfq}| = h_K \phi(\mfq)/\omega,
\end{equation}
 where $h_K$ is the class number of $K$, and $\omega \in \{2, 4, 6\}$
denotes the number of units of $\calO_K$.
\end{lemma}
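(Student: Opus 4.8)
The plan is to compute $|H^{\mfq}|$ via the standard exact sequence relating the ray class group to the ordinary class group. Recall that there is an exact sequence
\[
\calO_K^{\times} \longrightarrow (\calO_K/\mfq)^{\times} \longrightarrow H^{\mfq} \longrightarrow \Cl_K \longrightarrow 1,
\]
where the first map sends a unit to its residue class mod $\mfq$, the second sends a reduced residue class to the ray class of any integral ideal in it, and the third is the natural projection. (For $K$ imaginary quadratic there are no archimedean places to worry about, so the ``narrow'' ray class group coincides with $H^{\mfq}$ as defined in \eqref{def_hq}.) Surjectivity of $(\calO_K/\mfq)^{\times} \to J^{\mfq}_1/P^{\mfq}$ and exactness in the middle are routine; the only subtle point is identifying the kernel of $\calO_K^{\times} \to (\calO_K/\mfq)^{\times}$.

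From the exact sequence, $|H^{\mfq}| = h_K \cdot \phi(\mfq) / |\text{image of } \calO_K^{\times}|$, so the lemma is equivalent to the claim that $\calO_K^{\times} \to (\calO_K/\mfq)^{\times}$ is \emph{injective}, i.e. has image of size $\omega$. This is exactly the running hypothesis that the units represent distinct residue classes mod $\mfq$. So the real content of the lemma is the assertion that this hypothesis holds precisely when $\mfq \notin \{(2), ((-3\pm\sqrt{-3})/2)\}$ (among moduli with $\phi(\mfq) > 1$). I would verify this by a short case analysis on $\omega_K \in \{2,4,6\}$. For $\omega_K = 2$ the units are $\pm 1$, and $1 \equiv -1 \pmod{\mfq}$ forces $\mfq \mid (2)$, hence $\mfq = (2)$ (the only proper divisors of $(2)$ would have $\phi(\mfq) = 1$, or $(2)$ is prime/ramified). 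For $\omega_K = 4$ (so $K = \Q(i)$, units $\{\pm 1, \pm i\}$), two distinct units being congruent mod $\mfq$ forces $\mfq$ to divide one of $(2) = (1-i)^2 \cdot (\text{unit})$, $(1-i)$, or $(1+i)$; checking these, the only offending modulus with $\phi(\mfq) > 1$ is again $\mfq = (2)$. For $\omega_K = 6$ (so $K = \Q(\sqrt{-3})$, units the sixth roots of unity), differences of distinct units generate the ideals $(1), (\sqrt{-3}),$ or $(3) = (\sqrt{-3})^2\cdot(\text{unit})$ up to units, and one finds the bad moduli dividing these with $\phi(\mfq) > 1$ are exactly $(2)$ (note $(2)$ is inert in $\Q(\sqrt{-3})$, and $\zeta_6 \equiv 1$ would need... actually $\zeta_6 - 1$ has norm $1$, but $\zeta_3 - 1$ has norm $3$) — more carefully, $\zeta_6^3 = -1 \equiv 1$ needs $\mfq \mid (2)$, and $\zeta_6 \equiv 1$, i.e. $\zeta_6 - 1 = $ a unit, never happens, while $\zeta_3 \equiv 1$ needs $\mfq \mid (\sqrt{-3})$, giving $\mfq = (\sqrt{-3}) = ((-3+\sqrt{-3})/2)\cdot(\text{unit})$ up to relabeling, whence the two exceptional prime ideals above $3$.

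The main obstacle is purely bookkeeping: keeping straight which small ideals arise as $(u - u')$ for distinct units $u, u'$ in each of the three fields, factoring them, and confirming that once we exclude $(2)$ and the two primes above $3$ in $\Q(\sqrt{-3})$, every remaining modulus with $\phi(\mfq) > 1$ genuinely separates all the units. I expect no conceptual difficulty — the exact sequence does all the real work, and the case check is finite and elementary, amounting to the observation that the only units congruent to $1$ modulo a ``large'' ideal is $1$ itself, together with a list of the few exceptions.
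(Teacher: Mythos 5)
Your proposal is correct and takes essentially the same approach as the paper: the paper likewise reduces $|H^{\mfq}| = h_K\phi(\mfq)/\omega$ to the claim that $u - 1 \notin \mfq$ for every unit $u \neq 1$ (i.e.\ injectivity of $\calO_K^\times \to (\calO_K/\mfq)^\times$) via the identifications $J^{\mfq}/J^{\mfq}_1 \cong \Cl_K$ and $J^{\mfq}_1/P^{\mfq} \leftrightarrow$ unit-orbits of reduced residues, which is exactly your exact sequence, and then omits the finite case check that you spell out. One small correction in the $\omega_K = 6$ case: in $\Q(\sqrt{-3})$ the rational prime $3$ is ramified, so $\big((-3+\sqrt{-3})/2\big) = \big((-3-\sqrt{-3})/2\big) = (\sqrt{-3})$ is a \emph{single} prime ideal, not two, and $(3)$ never arises as $(u - u')$ since differences of distinct sixth roots of unity have norm $1$, $3$, or $4$, never $9$ --- though your final list of excluded moduli, $(2)$ and $(\sqrt{-3})$, is right.
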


This is not difficult to show: as $J^{\mfq} / J^{\mfq}_1$ is isomorphic to the usual
class group, \eqref{eqn_units} follows by showing that there are $\phi(\mfq)/\omega$
sets counted in \eqref{count_residue_classes}, which in turn follows by showing that
$u - 1 \not \in \mfq$ for each unit $u \neq 1$ of $\calO_K$. This latter fact is easily
checked (given the conditions on $\mfq$) and we omit the details.

\begin{remark}
In the case where $\mfq = \big(\frac{-3 \pm \sqrt{-3}}{2}\big)$
and $K = \mathbb{Q}(\sqrt{-3})$, the units of $\calO_K$ 
cover all reduced residue classes mod $\mfq$ and so the statement of Theorem \ref{thm_shiu} is empty.
\end{remark}

From the group $H^{\mfq}$ we obtain \itshape Hecke characters \upshape $\chi$ of $K$ by lifting
any character $\chi$ of $H^{\mfq}$ to $J^{\mfq}$ in the obvious way,
and setting $\chi(\mfa) = 0$ for any $a$ not coprime to $q$.
Throughout, we will only consider Hecke characters obtained
in this fashion. (See, however, Chapter VII.6 of \cite{neukirch} (for example) for a
more general discussion.) The associated \itshape Hecke $L$-functions \upshape
are defined by the equation
\begin{equation}
L(s, \chi) := \sum_{\mfa} \chi(\mfa) (\N \mfa)^{-s},
\end{equation}
where $\mfa$ runs over all integral ideals of $\calO_K$.

Our estimates for $\pi_1(x; q, a)$ will depend on a
zero-free region for the Hecke $L$-functions modulo $\mfq$. For convenience, we formulate 
this hypothesis as {\itshape Hypothesis ZF(C)}:
\begin{definition} 
If $C > 0$, we say that $\mfq$ satisfies Hypothesis ZF(C)
if none of
the Hecke $L$-functions modulo $\mfq$ have a zero in the region
\begin{equation}\label{nice_zero_free}
\sigma > 1 - C / \log[(\N q)( |t| + 1)].
\end{equation}
We say that $q \in \calO_K$ satisfies Hypothesis ZF(C) if the ideal $(q)$ does.
\end{definition}

We will prove the following:

\begin{theorem}\label{pnt_ap_good}
Suppose that $q \in \calO_K$ is
not $u$, $2u$, or $\frac{-3 \pm \sqrt{-3}}{2} u$ for any unit
$u$ of $\calO_K$, and that $q$ satisfies Hypothesis ZF(C) for some $C$.

Then for $D \geq 0$ we have
$$\pi_1(2x; q, a) - \pi_1(x; q, a) = (\omega_K + o_{x, D}(1)) \frac{x}{h_K \phi_K(q) \log x},$$
uniformly in $q$ for $(a, q) = 1$, $\N q \geq |\Delta_K|$, and $x \geq \N q^D$.
\end{theorem}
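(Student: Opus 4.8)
The plan is to follow Gallagher's classical argument as adapted to the Hecke $L$-function setting, passing from a $\psi$-type sum weighted by von Mangoldt coefficients to the prime-counting function $\pi_1$ at the very end. First I would introduce, for each character $\chi$ of $H^{\mfq}$, the Dirichlet series $-L'(s,\chi)/L(s,\chi) = \sum_{\mfa} \Lambda(\mfa)\chi(\mfa)(\N\mfa)^{-s}$, where $\Lambda(\mfa) = \log \N\mfp$ if $\mfa = \mfp^m$ and $0$ otherwise, and define $\psi_1(x;q,a) := \sum_{\N\mfa \le x,\ \mfa \equiv a\,(\mathrm{mod}\ \mfq)} \Lambda(\mfa)$, where the congruence is interpreted in the ad hoc sense of the introduction, i.e. as membership of $\mfa$ in the class of $(a)$ in $J^{\mfq}_1/P^{\mfq}$. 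By orthogonality of characters on $J^{\mfq}_1/P^{\mfq}$ (a quotient of $H^{\mfq}$ of size $\rcn \cdot h_K^{-1} \cdot |J^{\mfq}/J^{\mfq}_1|$, but more simply: the relevant average is over the $\phi(\mfq)/\omega$ classes of \eqref{count_residue_classes}), one extracts $\psi_1(x;q,a)$ as $\frac{1}{|G|}\sum_\chi \bar\chi(a)\,\psi(x,\chi)$ for a suitable finite abelian group $G$ and $\psi(x,\chi) = \sum_{\N\mfa \le x}\Lambda(\mfa)\chi(\mfa)$. The dyadic version $\psi_1(2x;q,a) - \psi_1(x;q,a)$ will come out with main term $\frac{1}{|G|}(2x - x) = \frac{\omega_K}{h_K\phi_K(q)}\cdot x$ after keeping track of the normalizations in Lemma \ref{units_lemma}, plus error terms $\frac{1}{|G|}\sum_{\chi \ne \chi_0}\bar\chi(a)(\psi(2x,\chi) - \psi(x,\chi))$.

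The analytic core is the estimate $\psi(2x,\chi) - \psi(x,\chi) \ll_{x,D} x/\log x$ uniformly over nonprincipal $\chi$ modulo $\mfq$ and over $\N q \le x^{1/D}$. Here I would apply the standard Perron/explicit-formula machinery: the Hecke $L$-functions $L(s,\chi)$ have the usual analytic continuation, functional equation, and finitely many ($O(\log(\N q(|t|+1)))$ per unit height) nontrivial zeros, all of which is classical (Neukirch, Ch. VII, or Lang, \emph{Algebraic Number Theory}). Using a smoothed Perron formula and shifting the contour into the zero-free region \eqref{nice_zero_free} guaranteed by Hypothesis ZF(C), one bounds $\psi(2x,\chi) - \psi(x,\chi)$ by the contribution of the relevant zeros, which is $\ll x \exp(-c\sqrt{\log x})$ or, more crudely but sufficiently, $\ll x (\log x)^{\text{const}} \exp(-C' \log x / \log(\N q \cdot \log x))$; since $\N q \le x^{1/D}$, the denominator is $\ll \log x / D$ up to the $\log\log x$ term, so this is $\ll_D x/\log^{A} x$ for any fixed $A$, in particular $o_{x,D}(x/\log x)$. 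The principal character $\chi_0$ contributes the main term via the pole of the Dedekind zeta function $\zeta_K(s)$ at $s = 1$ (with a correction for the Euler factors at primes dividing $\mfq$, which is absorbed since we count $\Lambda(\mfa)$ only over $\mfa$ coprime to $\mfq$; the discarded prime-power contribution is $O(\sqrt{x})$). This step — getting the error term uniform in $q$ with the explicit $x \ge \N q^D$ range — is the main obstacle, and it is exactly where Gallagher's paper \cite{gallagher} is the template; the only new wrinkle is that $\zeta_K$ replaces $\zeta$ and $\N q$ replaces $q$ in all the zero-counting bounds, which is harmless since $|\Delta_K| \le \N q$ by hypothesis absorbs the field-discriminant dependence into the modulus.

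Finally I would pass from $\psi_1$ to $\pi_1$. The contribution of prime ideal powers $\mfp^m$ with $m \ge 2$ to $\psi_1(2x;q,a) - \psi_1(x;q,a)$ is $O(\sqrt{x}\log x) = o(x/\log x)$, so $\psi_1(2x;q,a) - \psi_1(x;q,a) = \sum_{\substack{x < \N\mfp \le 2x \\ \mfp \equiv a\,(\mathrm{mod}\ \mfq)}} \log \N\mfp + o(x/\log x)$. Then partial summation over the dyadic block $(x,2x]$ — on which $\log\N\mfp = \log x + O(1)$ — converts this to $\log x \cdot (\pi_1(2x;q,a) - \pi_1(x;q,a)) + o(x/\log x)$, giving
\[
\pi_1(2x;q,a) - \pi_1(x;q,a) = \frac{\psi_1(2x;q,a) - \psi_1(x;q,a)}{\log x} + o_{x,D}\!\left(\frac{x}{\log x}\right) = (\omega_K + o_{x,D}(1))\frac{x}{h_K\phi_K(q)\log x},
\]
as claimed. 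Throughout, the identification of $1/|G|$ with $\omega_K/(h_K\phi_K(q))$ is precisely Lemma \ref{units_lemma}: the relevant group, by the discussion around \eqref{count_residue_classes}, has order $h_K \cdot (\phi(\mfq)/\omega) = \rcn$, so the density of the target class among all prime ideals of norm $\le x$ is $1/\rcn = \omega_K/(h_K\phi_K(q))$, matching the statement.
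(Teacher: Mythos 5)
Your outline correctly mirrors the paper's architecture — orthogonality over characters of the ray class quotient, an explicit-formula estimate on the character sums, and a final passage from the $\Lambda$-weighted sum to $\pi_1$ with a $O(\sqrt{x})$ prime-power correction. But there is a genuine gap at the analytic core: you claim that the zero-free region from Hypothesis ZF(C), together with Perron and the classical zero count $N_\chi(0,T) \ll T\log(\N\mfq\, T)$, yields $\psi(2x,\chi)-\psi(x,\chi) = o_{x,D}(x/\log x)$ uniformly over $\N\mfq \le x^{1/D}$. This is false. After truncating the explicit formula at height $T$, the contribution of the nontrivial zeros is at best $x^{1-\delta}\cdot N_\chi(0,T)$ with $\delta \asymp C/\log(\N\mfq\, T)$. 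To keep the Perron remainder $x\log^2 x/T$ acceptable you need $T \gg \log^3 x$, and then, taking $\N\mfq \asymp x^{1/D}$, the saving $x^{1-\delta} \ll x\,e^{-C'D}$ is only a constant deping on $D$, while $N_\chi(0,T)$ grows like a power of $\log x$. The product is $\gg x\,(\log x)^A e^{-C'D}$, which is not $o(x/\log x)$ as $x\to\infty$ for any fixed $D$. Your claimed bound $\ll x(\log x)^{\mathrm{const}}\exp(-C'\log x/\log(\N q\log x))$ exhibits exactly this behaviour: it equals $x(\log x)^{\mathrm{const}}e^{-C'D}$ in the critical range, which does not decay.

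The missing ingredient — and the real content of Gallagher's argument that you cite as a ``template'' — is a \emph{log-free zero-density estimate}: a bound of the shape $\sum_\chi N_\chi(\alpha,T) \le T^{c(1-\alpha)}$. This is what lets one integrate $x^\sigma\,dN(\sigma,T)$ by parts and see that zeros near $\sigma=1$, where $x^\sigma$ is dangerous, are rare enough that the total contribution is $\ll x\exp(-a\log x/\log\N\mfq)$, which does tend to $0$ with $D$ after summing over all $\chi\pmod{\mfq}$. The paper supplies exactly this via Fogels' theorem (Proposition \ref{prop_fogels}), the Hecke-$L$-function analogue of Gallagher's density estimate, and the proof of Proposition \ref{prop_char_sum} is precisely the integration-by-parts computation combining the zero-free region (to truncate the range of $\sigma$) with Fogels' density bound (to control the measure $dN$). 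Without it, the argument you sketch establishes a Siegel–Walfisz-type range $\N\mfq \ll (\log x)^A$, not the crucial range $\N\mfq \le x^{1/D}$ needed for the Maier matrix construction.
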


Here $o_{x, D}(1)$ denotes an error term bounded above by any $\epsilon > 0$,
provided both $x$ and $D$ are chosen sufficiently large. The error term also depends
on $C$, but in the application $C$ will be an absolute constant.

We further remark that
the condition $\N q \geq |\Delta_K|$ is required only if the
$o_{x, D}(1)$ term is to be independent of $K$. Also, the restriction
on $q$ is not serious, as we may find primes in arithmetic progressions $(\textmod \ q')$
for an appropriate multiple $q'$ of $q$.

To use Theorem \ref{pnt_ap_good}, we must prove 
that the zero-free region (\ref{nice_zero_free}) holds for a suitably
large (infinite) set of moduli. To define these moduli 
we introduce the notation
\begin{equation}
\calP(y, q, \mfp_0) := q \prod_{\N \mfp \leq y; \mfp \neq \mfp_0} \mfp.
\end{equation}
\begin{proposition}\label{prop_zero_free}
For all sufficiently large $x$ there exist an integer $y$ and a prime $\mfp_0$ with
$x < \N \calP(y, q, \mfp_0) \ll x \log^3 x$
and $\N \mfp_0 \gg \log y,$ 
such that $q$ satisfies Hypothesis ZF($C_2$) for an absolute constant $C_2$.
\end{proposition}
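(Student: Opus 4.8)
The plan is to produce the modulus $\calP(y, q, \mfp_0)$ by a counting argument over the primes $\mfp_0$ of moderate norm, exactly in the spirit of Gallagher's original construction. First I would fix a suitably large parameter $y$ and set $\calP(y,q) := q\prod_{\N\mfp \le y}\mfp$, so that $\calP(y,q,\mfp_0) = \calP(y,q)/\mfp_0$ for each $\mfp_0$ with $\N\mfp_0 \le y$. By the prime ideal theorem, $\log \N\calP(y,q) = \sum_{\N\mfp\le y}\log\N\mfp + O(\log\N q) \sim y$, so by choosing $y$ appropriately one arranges that removing a single prime $\mfp_0$ of norm in a dyadic range near $\log y$ leaves $\N\calP(y,q,\mfp_0)$ in the window $(x, x\log^3 x]$; the range $\log^3 x$ is the slack that lets us range $\mfp_0$ over a full dyadic block of primes while keeping $\N\mfp_0 \gg \log y$. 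This is routine bookkeeping with the prime ideal theorem.

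The heart of the matter is the zero-free region. The key point is that a Hecke $L$-function $L(s,\chi)$ modulo $\calP(y,q,\mfp_0)$ can have a zero violating \eqref{nice_zero_free} only in a controlled way: Landau's theorem (the ``no two Siegel zeros'' phenomenon, in the form used by Gallagher) guarantees that among all moduli up to a given size, at most one can carry an exceptional zero, and more usefully, a standard Turán-type / zero-repulsion argument over the family of moduli $\{\calP(y,q,\mfp_0)\}_{\mfp_0}$ shows that all but $O(1)$ of them satisfy ZF($C_2$) for an absolute $C_2$. Concretely I would: (i) invoke the classical zero-free region for Hecke $L$-functions $L(s,\chi\bmod\mfm)$, valid except possibly for a single real Siegel zero $\beta_\mfm$ attached to a single real character; (ii) observe that any character $\bmod\,\calP(y,q,\mfp_0)$ with a bad zero induces, by the conductor-divisibility structure, a primitive character whose conductor $\mff$ divides $\calP(y,q,\mfp_0)$, hence divides $\calP(y,q)$ and is coprime to $\mfp_0$; (iii) use the Landau/Deuring-Heilbronn repulsion inequality to show that two distinct such primitive moduli $\mff_1, \mff_2$ cannot both have zeros that close to $1$ unless $\N(\mff_1\mff_2)$ is large — which, after counting, forces the number of ``bad'' choices of $\mfp_0$ to be bounded absolutely. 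Since the dyadic block of admissible $\mfp_0$ has $\gg \log y/\log\log y \to \infty$ elements, a good $\mfp_0$ exists.

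I would carry out the steps in this order: (1) set up $\calP(y,q)$ and verify the size estimate $\N\calP(y,q,\mfp_0) \asymp x$ via the prime ideal theorem, pinning down the choice of $y$; (2) state the classical zero-free region and Siegel-zero dichotomy for Hecke $L$-functions of $K$; (3) reduce a hypothetical violation of ZF($C_2$) for modulus $\calP(y,q,\mfp_0)$ to the existence of a real zero of an $L$-function of small conductor $\mff \mid \calP(y,q)$ with $(\mff,\mfp_0)=1$; (4) apply the Deuring–Heilbronn repulsion phenomenon to bound the number of such conductors $\mff$, and hence the number of obstructed $\mfp_0$, by an absolute constant; (5) conclude that for $x$ large the admissible dyadic block of primes $\mfp_0$ (which has size $\to\infty$) contains a good one, and also satisfies $\N\mfp_0 \gg \log y$ by construction.

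The main obstacle I expect is step (4): making the Deuring–Heilbronn repulsion argument uniform in $K$ and clean enough to yield a genuinely \emph{absolute} constant $C_2$, rather than one depending on $\Delta_K$. This requires care because the exceptional (Siegel) zero itself is only controlled ineffectively, and one must argue — as Gallagher does over $\Q$ — that the modulus can be chosen to avoid the conductor of the offending character entirely, so that the ineffective constant never enters the final bound on the bubble. The rest is standard analytic number theory transcribed to the Hecke $L$-function setting; the quadratic field $K$ being fixed throughout (with its $K$-dependence tracked separately) keeps the uniformity issues manageable.
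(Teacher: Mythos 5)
Your overall template---invoke the Fogels/Landau dichotomy (Lemma \ref{prop_fogels_zero_free}) to isolate the at-most-one exceptional character, then delete a prime $\mfp_0$ so that the offending conductor no longer divides the modulus---is the right one, but your step (4), the repulsion-and-pigeonhole over the family $\{\calP(y,q,\mfp_0)\}_{\mfp_0}$, is logically backwards and would fail. The dichotomy does give at most one bad primitive conductor $\calP'' \mid \calP'(y,q)$; however, the choices of $\mfp_0$ obstructed by that conductor are exactly those with $\mfp_0 \nmid \calP''$, i.e.\ essentially \emph{all} primes in your dyadic block, not $O(1)$ of them. The \emph{good} choices are precisely the prime divisors of $\calP''$, and a generic dyadic block near norm $\log y$ need contain none of them, so counting bad $\mfp_0$ and appealing to the block's size gives nothing. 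What actually rescues the argument is a positive lower bound on $\N\calP''$: the Siegel-zero estimate \eqref{fogels_siegel_zero} gives $\beta < 1 - (|\Delta_K|\N\calP'')^{-4}$, and comparing with a hypothesized bad zero $\beta \geq 1 - C_3/\log(|\Delta_K|\N\calP'(y,q))$ forces $\N\calP'' \gg (\log\N\calP'(y,q))^{1/4}$, whence $\calP''$ has a prime divisor of norm $\gg \log\N\calP'' \gg \log y$. \emph{That specific prime} is the one to delete.

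Your phrase ``avoid the conductor of the offending character'' should therefore be sharpened to ``choose $\mfp_0 \mid \calP''$ so that $\calP'' \nmid \calP(y,q,\mfp_0)$'': then $\chi_1'$ no longer induces a character modulo $\calP(y,q,\mfp_0)$, and a second application of Lemma \ref{prop_fogels_zero_free} to the overmodulus $\calP'(y,q)$ (taking $C_2 = C_3/4$ to account for the slightly smaller norm) shows no other character modulo $\calP(y,q,\mfp_0)$ can have a zero that close to $1$ either, since two distinct $L$-functions modulo $\calP'(y,q)$ cannot both have zeros in the exceptional range. No Deuring--Heilbronn repulsion over a varying family is needed or used; the construction is deterministic, not probabilistic. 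Your step (1) size bookkeeping is essentially correct and agrees with the paper, which additionally exploits that a quadratic field has at most two primes of any given norm to control the jumps in $\N\calP'(y,q)$ as $y$ increases.
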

The proposition and its proof, given in Section \ref{sec_zero_free}, are the direct analogues of Theorem 1 of \cite{shiu}.
Note that the prime $\mfp_0$ is removed to ensure that the Siegel zero doesn't exist.
The definition of  ``sufficiently large'' depends on $K$. We could easily control the
$K$-dependence here, but it would be more difficult in Lemma 
\ref{lemma_1_mod_m} and so we don't bother.

\subsection{Proof of Theorem \ref{pnt_ap_good}}
Theorem \ref{pnt_ap_good} will follow from the following estimate:
\begin{proposition}\label{prop_char_sum}
If $\mfq$ satisfies Hypothesis ZF($C_1$) and $\max(\exp(\log^{1/2} x), \Delta_K) \leq \N \mfq \leq x^b$ for a fixed constant
$b > 0$, then we have
\begin{equation}\label{eqn_char_sum}
\sum_{\chi} \Big| \sum_{\N \mfp \in [x, 2x]} \chi(\mfp) \log(\N \mfp) \Big|
\ll x \exp \Big( -a \frac{\log x}{\log \N \mfq} \Big),
\end{equation}
where the constant $a$ depends only on $C_1$, 
the first sum is over all nonprincipal characters modulo $\mfq$,
and the implied constant is absolute.
\end{proposition}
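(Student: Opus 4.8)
The plan is to adapt Gallagher's treatment of the classical case, working with the von Mangoldt function $\Lambda_K$ of $K$ and the Hecke $L$-functions $L(s,\chi)$ of the ray class characters $\chi$ modulo $\mfq$. First I would pass from the prime-ideal sum to $\psi_K(u,\chi):=\sum_{\N \mathfrak{n}\le u}\Lambda_K(\mathfrak{n})\chi(\mathfrak{n})$; the ideals supported on prime powers $\mfp^{j}$, $j\ge 2$, contribute $\ll\sqrt{u}\,\log^{2}u$ per character, hence $\ll \N\mfq^{2}\sqrt{x}\,\log^{2}x$ in total (there being $\rcn\le\N\mfq^{2}$ Hecke characters modulo $\mfq$), which is far below the claimed bound once $b$ is small. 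For each nonprincipal $\chi$ I would then invoke the truncated explicit formula coming from the Hadamard factorization and functional equation of $L(s,\chi)$,
\[
\psi_K(u,\chi)=-\sum_{|\gamma|\le T}\frac{u^{\rho}}{\rho}+O\!\Big(\frac{u\log^{2}(u\N\mfq)}{T}+\log u\Big),
\]
the sum running over the non-trivial zeros $\rho=\beta+i\gamma$ of $L(s,\chi)$, there being no main term since $\chi\ne\chi_{0}$ and — crucially — no exceptional real zero among the $\rho$ by Hypothesis ZF$(C_{1})$. Taking the difference at $u=2x$ and $u=x$, choosing $T=\N\mfq^{2}$ (so $T\ll x^{2b}$ but still $T\gg\rcn$), and summing the error terms over $\chi$ gives a total error $\ll x(\log^{2}x)\N\mfq^{-1}+\N\mfq^{2}\log x$; using the hypothesis $\N\mfq\ge\exp(\log^{1/2}x)$ on the first piece, this is negligible against $x\exp(-a\log x/\log\N\mfq)$ provided $a<1$.

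It then remains to bound $\sum_{\chi}\sum_{|\gamma|\le T}|(2x)^{\rho}-x^{\rho}|/|\rho|$. From $|(2x)^{\rho}-x^{\rho}|/|\rho|=\big|\int_{x}^{2x}t^{\rho-1}\,dt\big|\ll\min\!\big(x^{\beta},\,x^{\beta}/(1+|\gamma|)\big)$ I would split the $\gamma$-range into dyadic blocks $|\gamma|\in(G,2G]$, $1\le G\le T$, and inside each block reduce matters to estimating
\[
\mathcal{S}(t):=\sum_{\chi}\sum_{\rho:\,|\gamma|\le t}x^{\beta}=\mathcal{N}(0,t)+\log x\int_{0}^{1}x^{\sigma}\mathcal{N}(\sigma,t)\,d\sigma,\qquad \mathcal{N}(\sigma,t):=\sum_{\chi\bmod\mfq}N(\sigma,t,\chi),
\]
at $t=2G$, where $N(\sigma,t,\chi)$ counts zeros of $L(s,\chi)$ with $\beta\ge\sigma$, $|\gamma|\le t$. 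The two ingredients are: (i) Hypothesis ZF, which forces $\mathcal{N}(\sigma,t)=0$ for $\sigma>1-C_{1}/\log(\N\mfq(t+1))$, so that every zero in the block has $x^{\beta}\le x^{1-\delta_{t}}$ with $\delta_{t}\gg1/\log(\N\mfq t)$; and (ii) a \emph{log-free} zero-density estimate $\mathcal{N}(\sigma,t)\ll(\N\mfq\,t)^{c(1-\sigma)}$ for $\tfrac12\le\sigma\le1$ with an absolute constant $c$. Given (ii), and taking $b$ small in terms of $c$ so that $\N\mfq^{O(c)}$ stays below a fixed power of $x$, the integral in $\mathcal{S}(t)$ is dominated by $\sigma$ near $1-\delta_{t}$, where the factor $(\N\mfq t)^{c\delta_{t}}$ it picks up is $O(1)$ by the zero-free region; hence $\mathcal{S}(t)\ll x^{1-\delta_{t}}$ with no surviving power of $\log x$. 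Summing the dyadic block bounds weighted by $1/G$ — the blocks with $G\lesssim x^{1/c}/\N\mfq$ contribute $\ll x^{1-1/c}\log x$, and the remaining $O(\log x)$ blocks up to $G=T=\N\mfq^{2}$ are handled by the density estimate directly, the weight $1/G$ beating the growth of $(\N\mfq t)^{c}$ — and using $\delta_{t}\gg1/\log(\N\mfq T)\asymp1/\log\N\mfq$ since $T\le\N\mfq^{2}$, the whole expression is $\ll x\exp(-a\log x/\log\N\mfq)$ for a suitable $a=a(C_{1})$.

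The main obstacle is ingredient (ii): a log-free zero-density estimate for the Hecke $L$-functions of $K$, uniform over all ray class characters modulo $\mfq$. A log-power estimate would not suffice, because when $\N\mfq$ is a genuine power of $x$ the target saving $\exp(-a\log x/\log\N\mfq)$ is merely a constant and cannot absorb any factor $(\log x)^{A}$; one therefore really needs Gallagher's large-sieve route. Concretely, I would prove a large-sieve inequality $\sum_{\chi\bmod\mfq}\big|\sum_{\N\mathfrak{n}\le N}a_{\mathfrak{n}}\chi(\mathfrak{n})\big|^{2}\ll(\N\mfq+N)\sum_{\mathfrak{n}}|a_{\mathfrak{n}}|^{2}$ for the characters of $H^{\mfq}$ — whose proof must handle the counting of ideals by norm in ray classes, and where the condition $\N\mfq\ge|\Delta_K|$ enters in order to keep the constants absolute — and then feed it into the zero-detection argument of \cite{gallagher} applied to Dirichlet polynomials over ideals. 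The remaining ingredients are routine once the $L$-function machinery is in place: the truncated explicit formula above with its explicit $\N\mfq$-dependence (via the functional equation and the location of the trivial zeros, which is where the $O(\log u)$ term originates), the verification that Hypothesis ZF$(C_{1})$ genuinely excludes a Landau--Siegel zero so the explicit formula carries no extra term, and the bookkeeping needed to make every implied constant absolute.
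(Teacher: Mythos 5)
Your overall strategy is the same as the paper's (and Gallagher's): reduce to $\psi_K(u,\chi)$, apply the truncated explicit formula with $T=\N\mfq^{2}$, use a Stieltjes integral / integration by parts to convert the zero sum $\sum_{\chi}\sum_{\rho}x^{\beta}$ into an integral against the zero-counting function, cap the $\sigma$-range using Hypothesis ZF, and finish with a log-free zero-density estimate. Two points of divergence are worth flagging.

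First, you split the zeros dyadically in $|\gamma|$ and exploit the decay $|(2x)^{\rho}-x^{\rho}|/|\rho|\ll x^{\beta}/(1+|\gamma|)$. The paper does not bother: it uses the cruder bound $\ll x^{\beta}$ for every zero and absorbs the resulting boundary term $\sum_{\chi}N_{\chi}(0,T)\ll T\,\N\mfq\,\log(T\N\mfq)\ll\N\mfq^{3}\log\N\mfq$ directly, which is already $\ll x^{3/4}\log x$ because $T=\N\mfq^{2}$ is forced to be a small power of $x$. Your version is not wrong, just more work than needed in this regime.

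Second, and more substantively, you treat the log-free density estimate $\mathcal{N}(\sigma,t)\ll(\N\mfq\,t)^{c(1-\sigma)}$ as the main open ingredient and propose to derive it from a large-sieve inequality for ray class characters of $K$. The paper instead cites this estimate outright: it is a theorem of Fogels (\emph{On the zeros of $L$-functions}, Acta Arith.\ 11 (1965)), stated in the paper as Proposition \ref{prop_fogels}, giving $\sum_{\chi}N_{\chi}(\alpha,T)\leq T^{c(1-\alpha)}$ with $c$ absolute in the quadratic case. Your plan to re-derive it via a large sieve is the route Gallagher took over $\mathbb{Q}$, and it would work, but it amounts to re-proving a known and nontrivial result; the paper avoids that expense entirely. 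You are right, and it is a good observation, that a log-power density estimate would not suffice here: when $\N\mfq$ is a genuine power of $x$ the target saving $\exp(-a\log x/\log\N\mfq)$ is only a constant, so no surviving $(\log x)^{A}$ factor can be tolerated — that is exactly why the log-free form (whether from Fogels or from your proposed large-sieve argument) is essential.
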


With additional care, we expect to be able to prove a similar result
for an arbitrary number field $K$.

Theorem \ref{pnt_ap_good} follows from Proposition \ref{prop_char_sum} as follows:
By the orthogonality relations, we have
$$\sum_{\substack{ \N \mfp \in [x, 2x] \\ \mfp \equiv a \ (\textmod \ q)}} \log(\N \mfp)
= \frac{1}{\rcn} \sum_{\N \mfp \in [x, 2x]} 
\sum_{\chi \ (\textmod \ \mfq)} \bar{\chi}(a) \chi(\mfp) \log(\N \mfp)$$
$$= \frac{1}{\rcn} \sum_{\N \mfp \in [x, 2x]} \log(\N \mfp) +
O\bigg( \frac{1}{\rcn} \sum_{\chi \neq \chi_0} \bigg| \sum_{\N \mfp \in [x, 2x]}
\chi(\mfp) \log(\N \mfp) \bigg| \bigg),$$
and for $x \leq \exp((\log \N \mfq)^2)$, the result now follows from the prime ideal theorem and Proposition \ref{prop_char_sum}.

For the (easier) range $x > \exp((\log \N \mfq)^2)$, a proof can be given as follows.
Take $T = \exp((\log x)^{3/4})$ in the proof of Proposition
\ref{prop_char_sum}, and the quantity in \eqref{eqn_char_sum} is
$\ll x \exp(-a (\log x)^{1/4})$, which suffices for our result.

It therefore suffices to prove Proposition \ref{prop_char_sum}, and we will closely follow Gallagher
\cite{gallagher}. Gallagher proves a similar result for
Dirichlet $L$-functions, but with an additional sum over moduli $q$. He deduces
his result from a log-free zero-density estimate for these $L$-functions,
and in our case the appropriate zero-density estimate has been proved\footnote{
This is stated, in a slightly different form, after the main theorem of \cite{fogels}. Note that Fogels
published a corrigendum to \cite{fogels}, but that it does not affect the statement of the main results.}
by Fogels \cite{fogels}:
\begin{proposition}[Fogels]\label{prop_fogels}
We have for any $\mfq \in \calO_K$ and any $T \geq \Delta_K \N \mfq$
\begin{equation}\label{eqn_fogels}
\sum_{\chi} N_{\chi} (\alpha, T) \leq T^{c(1 - \alpha)}.
\end{equation}
Here $N_{\chi} (\alpha, T)$ denotes the number of zeroes $\rho = \beta + it$ 
of $L(s, \chi)$ with $\alpha < \beta < 1$ and $|t| < T$,
$\chi$ ranges over all characters modulo $\mfq$, 
$\Delta_K$ is the discriminant of $K$, and
$c$ is (for quadratic fields) an absolute constant.
\end{proposition}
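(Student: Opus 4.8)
The plan is to establish this log-free density estimate by Gallagher's large-sieve (H\'{a}lasz-type) method \cite{gallagher}, transplanted from Dirichlet $L$-functions to the Hecke $L$-functions $L(s,\chi)$ attached to characters $\chi$ of the ray class group $H^{\mfq}$. It is cleanest to aim for the form
\begin{equation*}
\sum_{\chi \bmod \mfq} N_\chi(\alpha, T) \ll \big(|\Delta_K|\,\N\mfq\,T\big)^{c_0(1-\alpha)},
\end{equation*}
with $c_0$ depending only on $[K:\Q]=2$; since one is given $T \ge |\Delta_K|\,\N\mfq$, the quantity $|\Delta_K|\,\N\mfq\,T$ is at most $T^2$, and \eqref{eqn_fogels} then follows with the absolute constant $c=2c_0$. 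A possible Siegel zero of a real $\chi$ contributes at most $1$ to the left side and is ignored throughout.

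Three ingredients are needed. First, standard analytic input for $L(s,\chi)$ with $\chi$ a character of $H^{\mfq}$: the functional equation, with completed conductor $\asymp |\Delta_K|\,\N\mfq$; the Phragm\'{e}n--Lindel\"{o}f convexity bound $L(\sigma+it,\chi)\ll\big(|\Delta_K|\,\N\mfq\,(|t|+2)\big)^{A}$ on $\sigma\ge\tfrac12$; and the Riemann--von Mangoldt estimate $\ll\log\big(|\Delta_K|\,\N\mfq\,(|t|+2)\big)$ for the number of zeros of $L(s,\chi)$ in a unit-height box. These are routine for ray class $L$-functions, but the $K$-dependence must be kept explicit so that it is absorbed once $T\ge|\Delta_K|\,\N\mfq$ — this is what forces $c$ to be absolute. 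Second, a zero-detection lemma: introduce the mollifier $M_\chi(s)=\sum_{\N\mfm\le z}\mu_K(\mfm)\chi(\mfm)(\N\mfm)^{-s}$, apply a smoothed Perron identity with a rapidly decaying kernel to $L(s,\chi)M_\chi(s)$, and shift the contour past $s=\rho$; discarding the shifted integral via the convexity bound shows that every zero $\rho=\beta+i\gamma$ with $\alpha<\beta<1$, $|\gamma|\le T$, forces
\begin{equation*}
\Big|\sum_{z<\N\mfa\le y} b_\chi(\mfa)(\N\mfa)^{-\rho}\Big|\gg 1, \qquad b_\chi(\mfa)=\chi(\mfa)\!\!\sum_{\mathfrak{d}\mid\mfa,\ \N\mathfrak{d}\le z}\!\!\mu_K(\mathfrak{d}),
\end{equation*}
where $y$ is a fixed power of $|\Delta_K|\,\N\mfq\,T$; a dyadic splitting then supplies, for each such zero, a block $(N,2N]\subseteq(z,y]$ carrying a partial sum of size $\gg(\log y)^{-1}$. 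Third, a large-sieve mean-value inequality over $\calO_K$: for points $s_{\chi,r}=\sigma_{\chi,r}+it_{\chi,r}$ with $|t_{\chi,r}|\le T$ that are well-spaced within each $\chi$ (meaning $|t_{\chi,r}-t_{\chi,r'}|\ge\tfrac12$ for $r\ne r'$),
\begin{equation*}
\sum_{\chi\bmod\mfq}\ \sum_r\Big|\sum_{\N\mfa\sim N} c(\mfa)(\N\mfa)^{-s_{\chi,r}}\Big|^2\ll\big(|\Delta_K|\,\N\mfq\,T+N\big)\sum_{\N\mfa\sim N}\frac{|c(\mfa)|^2}{(\N\mfa)^{2\sigma_0}},
\end{equation*}
with $\sigma_0=\min_r\sigma_{\chi,r}$, obtained in log-free form (in the style of Montgomery's mean value theorem) from orthogonality of the characters of $H^{\mfq}$ together with the classical large sieve for exponential sums over the lattice $\calO_K\subset\C$, with some care for the multiplicity of ideal norms.

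These are combined by the H\'{a}lasz--Montgomery duality inequality $\sum_j|\langle w,v_j\rangle|^2\le\|w\|^2\max_j\sum_k|\langle v_j,v_k\rangle|$. Fix a dyadic scale $N$ and write $b_\chi(\mfa)=\chi(\mfa)g(\mfa)$ with $g(\mfa)=\sum_{\mathfrak{d}\mid\mfa,\ \N\mathfrak{d}\le z}\mu_K(\mathfrak{d})$, so that the coefficient vector $w=\big(g(\mfa)(\N\mfa)^{-\alpha}\big)_{\mfa}$ is independent of $\chi$; passing from the zeros detected at scale $N$ to a well-spaced subset of size $R_N$ and applying the duality inequality, with the mean-value bound controlling the maximum, gives
\begin{equation*}
R_N\ll\|w\|^2\,\big(|\Delta_K|\,\N\mfq\,T+N\big),\qquad \|w\|^2=\sum_{\N\mfa\sim N}g(\mfa)^2(\N\mfa)^{-2\alpha}.
\end{equation*}
Everything now turns on the fact that the mollifier, besides removing the pole, compresses $\|w\|^2$: a second-moment computation — expand the square, count ideals in divisibility classes, and use that the Mertens-type product $\prod_{\N\mathfrak{p}\le z}(1-\N\mathfrak{p}^{-1})\asymp 1/(\kappa_K\log z)$ cancels the density $\kappa_K$ of ideals of $\calO_K$ — gives $\sum_{\N\mfa\le N}g(\mfa)^2\ll N/\log z$ with an absolute implied constant, once $z$ is a fixed small power of $N$. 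Letting $N$ range dyadically between a power $z$ of $|\Delta_K|\,\N\mfq\,T$ and a larger power $y$, summing $R_N$ over the $\ll\log y$ scales, and using the $1/\log z$ compression to recoup the logarithmic losses incurred in the well-spacing and the large sieve, one sums a geometric series in $N$ and arrives at $\sum_\chi N_\chi(\alpha,T)\ll\big(|\Delta_K|\,\N\mfq\,T\big)^{c_0(1-\alpha)}$. Zeros with $\beta$ inside the classical zero-free region of $\zeta_K$ are disposed of separately — there are none beyond a possible Siegel zero — so nothing logarithmic intrudes near $\sigma=1$.

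The main obstacle is precisely that the argument has to be made genuinely log-free. Each ingredient is individually standard, but every logarithmic loss must cancel: the mean-value inequality over $\calO_K$ must be proved in log-free form and with the right uniformity in $\N\mfq$ and $T$ (all $K$-dependence hidden inside $|\Delta_K|\,\N\mfq\le T$, which is what keeps $c$ absolute for quadratic $K$); the logarithmic gain $1/\log z$ from the mollifier has to be delicately balanced against the loss from replacing the zeros by a well-spaced set and against the multiplicity of ideal norms; and the abscissa shift from $\beta$ to $\alpha$ inside a dyadic block, carried out by partial summation, must not reintroduce logarithms. Carrying out this bookkeeping in the number-field setting — rather than settling for the much easier bound $\ll\big(|\Delta_K|\,\N\mfq\,T\big)^{c_0(1-\alpha)}(\log T)^{B}$, which is not strong enough for the Maier-matrix application — is where essentially all of the work resides, and is what is done by Fogels \cite{fogels}.
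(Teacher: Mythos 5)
The paper does not prove Proposition~\ref{prop_fogels}; it is quoted verbatim from Fogels~\cite{fogels}, with a footnote merely recording that Fogels states the result ``in a slightly different form, after the main theorem'' and that the published corrigendum does not affect it. There is therefore no internal proof to compare against: the paper's treatment is a citation.

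Your sketch is a sensible outline of the Montgomery--Gallagher--Hal\'asz mechanism for log-free zero-density estimates, transplanted from Dirichlet to Hecke $L$-functions, and this is indeed the framework in which Fogels works. You have correctly identified the essential ingredients: convexity and Riemann--von Mangoldt bounds with the conductor tracked as $|\Delta_K|\,\N\mfq$; a zero-detecting polynomial built from a M\"obius mollifier over ideals; a mean-value (large sieve) inequality for $\sum_{\mfa} c(\mfa)\chi(\mfa)(\N\mfa)^{-s}$ with the right uniformity in $\N\mfq$ and $T$; Hal\'asz--Montgomery duality; and the mollifier compression $\sum_{\N\mfa\le N} g(\mfa)^2 \ll N/\log z$, where the residue of $\zeta_K$ at $s=1$ cancels against the Mertens product over $\calO_K$. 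You have also correctly diagnosed why a $(\log T)^B$ loss would be fatal for the Maier-matrix application --- since $D$ stays bounded as $\N\mfq\to\infty$ in the proof of Theorem~\ref{pnt_ap_good}, any such factor fails to be absorbed by the exponential saving --- and why the hypothesis $T\ge\Delta_K\,\N\mfq$ is what lets $c$ be taken absolute for quadratic $K$.

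That said, what you have written is a plan, not a proof, and you say so yourself: the genuinely hard content --- a truly log-free large sieve over $\calO_K$, geometric decay across the $\asymp\log y$ dyadic scales, control of the multiplicity of ideal norms, the partial summation from $\beta$ back to $\alpha$ without reintroducing logarithms --- is explicitly deferred to Fogels. There are also loose ends in the bookkeeping: $z$ is introduced as the mollifier length and later silently redeclared to be a fixed power of $|\Delta_K|\,\N\mfq\,T$, and the abscissa $\sigma_0$ in the mean-value inequality is not visibly matched to the $(\N\mfa)^{-\alpha}$ weight in the vector $w$ fed into the duality step. None of this is a conceptual error, but in a log-free argument every such gap is precisely where logarithms leak in. You have accurately reconstructed Fogels' strategy and located the difficulties; the proposal does not, however, replace the citation, and the paper's choice to cite the result rather than reprove it is the appropriate one at this level.
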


\begin{proof}[Proof of Proposition \ref{prop_char_sum}]
At the outset, we choose
$T = (\N \mfq)^2 \leq x^{1/2c}$, which is an acceptable choice in all
of our estimates.

By standard analytic techniques (see (5.53) and (5.65) of \cite{IK}), we have
\begin{equation}\label{ik_sum}
\sum_{\N \mfa \in [x, 2x]} \chi(\mfa) \Lambda(\mfa)
= \delta_\chi x - \sum_\rho \frac{(2x)^{\rho} - x^{\rho}}{\rho} + O\Big(\frac{x \log^2  x}{T}\Big),
\end{equation}
where $\delta_{\chi}$ is 1 or 0 according to whether $\chi$
is principal or not, $\Lambda(\mfa) := \log(\N \mfp)$ if $\mfa$ is a power of some prime $\mfp$
and 0 otherwise, and $\rho$ ranges over all the zeroes $\rho = \beta + it$ of $L(s, \chi)$ in the 
critical strip with $|t| < T$.

We observe that for each $\rho = \beta + it$,
$$\frac{(2x)^{\rho} - x^{\rho}}{\rho} \ll x^{\beta}.$$
The terms where $\mfa$ is a prime
power (but not a prime) contribute $\ll x^{1/2}$ to the sum (\ref{ik_sum})
and so may be absorbed into the error
term for $T \leq x^{1/2}$. Therefore, for nonprincipal $\chi$ we see that
$$\sum_{\N \mfp \in [x, 2x]} \chi(\mfp) \log(\N \mfp) \ll \sum_{\rho} x^{\beta}
+ \frac{x \log^2 x}{T}.$$
Therefore,
$$\sum_{\chi \neq \chi_0} \Big| \sum_{\N \mfp \in [x, 2x]} \chi(\mfp) \log(\N \mfp) \Big|
\ll \sum_{\chi \neq \chi_0} \sum_{\rho} x^{\beta}
+ \frac{x \log^2 x (\N \mfq)}{T}.$$
The sum over $\chi$ and $\rho$ on the right is
\begin{equation}\label{big_integral}
-\int_0^1 x^{\sigma} d_{\sigma}\Big(\sum_{\chi \neq \chi_0} N_{\chi}(\sigma, T)\Big)
= -x^{\sigma} \Big( \sum_{\chi \neq \chi_0} N_{\chi}(\sigma, T) \Big) \bigg|_0^1
+ \int_0^1 x^{\sigma} \log x \Big(\sum_{\chi \neq \chi_0} N_{\chi} (\sigma, T)\Big) d\sigma.
\end{equation}
The first term of (\ref{big_integral}) is (\cite{IK}, Theorem 5.8)
$$\sum_{\chi \neq \chi_0} N_{\chi}(0, T) \ll T \N \mfq\log(T \N \mfq).$$
Using the zero-free region (\ref{nice_zero_free}) and Proposition
\ref{prop_fogels},
we see that the second term of (\ref{big_integral}) is
$$\ll \int_0^{1 - C_1/\log[(\N \mfq)(T + 1)]} (x^{\sigma} \log x) T^{c (1 - \sigma)}
d \sigma.$$
Evaluating the integral above and recalling that $T \leq x^{1/2c}$, this second
term is
$$\ll x \exp\biggl( -\frac{C_1}{2} \frac{\log x}{\log[(\N \mfq)(T + 1)]}\biggr).$$
We conclude from all these estimates that
\begin{multline*}
\sump_{\chi} \Big| \sum_{\N \mfp \in [x, 2x]} \chi(\mfp) \log(\N \mfp) \Big|
\ll \\ \frac{(x \log^2 x) \N \mfq}{T} +
T \N \mfq \log (T \N \mfq) +
 x \exp\biggl( -\frac{C_1}{2} \frac{\log x}{\log[(\N \mfq)(T + 1)]}\biggr).
\end{multline*}
With the choice $T = (\N \mfq)^2$ and the hypothesis that $\max(\exp(\log^{1/2} x), |\Delta_K|) 
\leq \N \mfq
\leq \min(x^{1/4c}, x^{1/4})$, we obtain the proposition.
\end{proof}
\subsection{Proof of Proposition \ref{prop_zero_free}}\label{sec_zero_free}

The proof follows Theorem 1 of \cite{shiu}.
We require the following zero-free region for Hecke $L$-functions,
also due to Fogels \cite{fogels_zerofree}:
\begin{lemma}[Fogels]\label{prop_fogels_zero_free}
Assume that $\mfa$ is an ideal of $\calO_K$ with $|\Delta_K \N \mfa|$ sufficiently
large. Then $\mfa$ satisfies Hypothesis ZF$(C_3)$ for an absolute constant $C_3$, 
with the possible exception
of a single zero $\beta$ of one Hecke $L$-function $L(s, \chi)$ modulo $\mfa$.
If $\beta$ exists then it must be real and satisfy
\begin{equation}\label{fogels_siegel_zero}
\beta < 1 - (|\Delta_K| \N \mfa)^{-4}.
\end{equation}
\end{lemma}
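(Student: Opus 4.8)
The plan is to run the classical de la Vall\'ee Poussin argument for the whole family of Hecke $L$-functions modulo $\mfa$, and then to control the one possible exceptional real zero by Landau's device. Every Hecke character here factors through the finite ray class group $H^{\mfa}$, hence is of finite order with conductor $\mathfrak{f}_\chi \mid \mfa$; Hecke's functional equation then involves a single $\Gamma_{\C}$-factor and the quantity $|\Delta_K|\N\mathfrak{f}_\chi \le |\Delta_K|\N\mfa$, so it is $|\Delta_K|\N\mfa$ --- not $\N\mfa$ alone --- that enters all estimates. Since $\N\mfa$ dominates $|\Delta_K|$ in the intended range (and $|\Delta_K|$ is in any case fixed), $\log(|\Delta_K|\N\mfa(|t|+1))$ and $\log(\N\mfa(|t|+1))$ agree up to a bounded factor, so passing from one region to the other only rescales $C_3$; this is why it is harmless to state the region without the discriminant.

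First I would invoke the standard consequences of the functional equation and the Hadamard product: a local zero count $\#\{\rho : L(\rho,\chi)=0,\ |\im\rho-t|\le 1\} \ll \log(|\Delta_K|\N\mfa(|t|+1))$ and the bound
$$-\re\frac{L'}{L}(s,\chi) \le A\log\big(|\Delta_K|\N\mfa(|t|+1)\big) + \delta_\chi\,\re\frac{1}{s-1} - \sum_\rho \re\frac{1}{s-\rho}$$
for $1<\sigma\le 2$, the sum over zeros with $|\im\rho-t|\le 1$, $\delta_\chi$ being $1$ for principal $\chi$ and $0$ otherwise, and $A$ absolute; likewise $-\zeta_K'/\zeta_K(\sigma) < (\sigma-1)^{-1} + A\log|\Delta_K|$. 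Given a hypothetical zero $\rho=\beta+i\gamma$ of $L(s,\chi)$, I would then feed these into the non-negativity of the Dirichlet series underlying
$$3\Big(-\frac{\zeta_K'}{\zeta_K}(\sigma)\Big) + 4\,\re\Big(-\frac{L'}{L}(\sigma+i\gamma,\chi)\Big) + \re\Big(-\frac{L'}{L}(\sigma+2i\gamma,\chi^2)\Big) \ge 0$$
(the inequality $3+4\cos\theta+\cos2\theta\ge 0$), retain only the zero-term at $\rho$ among the non-negative $-\re(s-\rho)^{-1}$ contributions, and optimise $\sigma-1 \asymp 1/\log(|\Delta_K|\N\mfa(|\gamma|+1))$. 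This yields $\beta < 1 - C_3/\log(|\Delta_K|\N\mfa(|\gamma|+1))$ unless $\chi^2=\chi_0$ and $\gamma$ is so small that the pole at $s=1$ of $L(s,\chi^2)=\zeta_K(s)\cdot(\text{finite factors})$ swallows the gain, i.e. unless $\chi$ is real and $\gamma=0$ --- the familiar exception. The same inequality applied to two putative real zeros of a real $\chi$ shows that such a $\chi$ has at most one real zero near $1$.

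It remains to see there is at most one exceptional real character, and to bound its zero. For distinct real characters $\chi_1\ne\chi_2$ modulo $\mfa$, the product $\zeta_K(s)L(s,\chi_1)L(s,\chi_2)L(s,\chi_1\chi_2)$ is the Dedekind zeta function of a biquadratic extension of $K$, hence has non-negative Dirichlet coefficients and a simple pole at $s=1$; real zeros of $L(s,\chi_1)$ and $L(s,\chi_2)$ both within $o(1/\log(|\Delta_K|\N\mfa))$ of $1$ would produce a zero of order $\ge 2$ next to this simple pole, which the quantitative form of Landau's lemma forbids once $|\Delta_K\N\mfa|$ is large. For the surviving real $\chi$ and its simple real zero $\beta$, I would use $F(s):=\zeta_K(s)L(s,\chi)=\sum_{\mfb}\big(\sum_{\mathfrak{d}\mid\mfb}\chi(\mathfrak{d})\big)(\N\mfb)^{-s}$, which has non-negative coefficients and constant term $1$. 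Landau's comparison argument for such a series --- using that $F(s)-(\res_{s=1}F)/(s-1)$ is entire, that $\res_{s=1}\zeta_K(s)=2\pi h_K/(\omega\sqrt{|\Delta_K|})$, and crude bounds on $F$ and its derivatives near $s=1$ --- produces an effective lower bound $L(1,\chi)\gg(|\Delta_K|\N\mfa)^{-\kappa}$ for some absolute $\kappa<4$. On the other hand, integrating the derivative bound over $[\beta,1]$ inside the zero-free region already established gives $L(1,\chi)=\int_\beta^1 L'(\sigma,\chi)\,d\sigma\ll(1-\beta)\log^2(|\Delta_K|\N\mfa)$. Combining, $1-\beta\gg(|\Delta_K|\N\mfa)^{-\kappa}/\log^2(|\Delta_K|\N\mfa)\ge(|\Delta_K|\N\mfa)^{-4}$ once $|\Delta_K\N\mfa|$ is large, which is the asserted bound.

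I expect the main obstacle to be the effective lower bound for $L(1,\chi)$ at a real Hecke character: unlike Siegel's theorem it must be completely explicit and uniform in both $\N\mfa$ and $|\Delta_K|$, and carefully tracking the residue of $\zeta_K$, the value of $F$ at the comparison point, and the contribution of the finitely many ramified Euler factors of $F$ is precisely what fixes the exponent $4$ (any $\kappa<4$ leaves ample slack). A secondary, purely bookkeeping, issue is to carry the $|\Delta_K|$-dependence through every functional-equation input so that $C_3$ emerges genuinely absolute --- straightforward here, since $\N\mfa$ dominates $|\Delta_K|$ in all applications.
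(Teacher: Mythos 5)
The paper does not prove this lemma: it is cited verbatim as a theorem of Fogels \cite{fogels_zerofree}, with no argument supplied. Your sketch is therefore filling a gap the author deliberately left to the literature, and the route you take --- the de la Vall\'ee Poussin $3+4\cos\theta+\cos2\theta$ argument for the classical zero-free region, Landau's lemma applied to $\zeta_K(s)L(s,\chi_1)L(s,\chi_2)L(s,\chi_1\chi_2)$ (the Dedekind zeta function of the $(\Z/2\Z)^2$-extension) to exclude two exceptional real characters, and then $F(s)=\zeta_K(s)L(s,\chi)=\zeta_M(s)$ with nonnegative coefficients for the effective lower bound on $L(1,\chi)$ --- is the standard modern presentation of exactly this result, and it is sound. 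In fact your chain $L(1,\chi)\gg(|\Delta_K|\N\mfa)^{-\kappa}$ together with $L(1,\chi)\ll(1-\beta)\log^2(|\Delta_K|\N\mfa)$ gives an exponent considerably better than the $-4 = -2[K:\Q]$ that Fogels states (via the residue of $\zeta_M$ one gets $\kappa$ near $1/2$, not merely $<4$), so the stated bound is weaker than what the method yields; that is harmless here.

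Two small points worth keeping in mind if you were to write this out in full. First, the characters modulo $\mfa$ need not be primitive, so $L(s,\chi)$ differs from the primitive $L(s,\chi^*)$ by finitely many Euler factors $(1-\chi^*(\mfp)\N\mfp^{-s})$; these vanish only on $\re s=0$, so they do not create zeros in the region under discussion, but the functional-equation inputs (convexity bound, zero-counting, Hadamard product) must be applied to $\chi^*$ with its conductor $\mathfrak{f}_\chi\mid\mfa$, and then $\N\mathfrak{f}_\chi\le\N\mfa$ is used to pass back. Second, in the Landau step the lower bound on the residue of $\zeta_M$ at $s=1$ requires an absolute lower bound on the regulator $R_M$ uniform over quartic fields $M$; this is a classical fact (Remak, Friedman) but needs to be cited to keep the constant genuinely absolute. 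Neither of these affects the correctness of your outline.
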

\begin{remark}
The above results in fact hold for an arbitrary number field $K$. In this
case $C$ depends on the degree of $K$, and the exponent $-4$ in (\ref{fogels_siegel_zero})
should be replaced with $-2 [K : \mathbb{Q}]$. As elsewhere in this paper,
``sufficiently large'' is allowed to depend on $K$ (even for quadratic fields).
\end{remark}
\begin{proof}[Proof of Proposition \ref{prop_zero_free}]

Consider the product
\begin{equation}
\calP'(y, q) := q \prod_{\N \mfp \leq y} \mfp,
\end{equation}
and suppose that an exceptional character mod $\calP'(y, q)$ exists; i.e.,
suppose that there exists a character $\chi_1$ mod $\calP'(y, q)$
whose $L$-function has a real zero $\beta$ in the range
\begin{equation}\label{beta_range}
1 \geq \beta \geq 1 - \frac{C_3}{\log(|\Delta_K| \N \calP'(y, q))}.
\end{equation}
Write $\chi'_1$ (mod $\calP''$) for the primitive character inducing $\chi_1$,
so that $\calP'' | \calP'(y, q).$ Then comparing (\ref{beta_range})
with (\ref{fogels_siegel_zero}) we see\footnote{
If $|\Delta_K|$ is small
it might be the case that $\calP''$ is of too small norm to apply
\eqref{fogels_siegel_zero}. For each such $K$ we may choose a fixed ideal 
$\mathfrak{b}$ of sufficiently large norm, and write $\chi_1''$ for the
character modulo $\mathfrak{b} \calP''$ induced by $\chi'_1$. The associated
$L$-function will have a zero at the same spot, and we conclude that
$\N(\mfb \calP'') \gg \frac{1}{|\Delta_K|} (\log \N \calP'(y, q))^{1/4}$.
As $\mfb$ is fixed for each $K$, this implies that
$\N \calP'' \gg \frac{1}{|\Delta_K|} (\log \N \calP'(y, q))^{1/4}$
as well.}
that $\N \calP'' \gg \frac{1}{|\Delta_K|}(\log \N \calP'(y, q))^{1/4}.$
We thus see that for sufficiently large $y$ (in terms of $q$),
$\calP''$ will have a prime divisor $\mfp_0$ satisfying
$\mfp_0 \gg \log (\N \calP'') \gg \log \log (\N \calP'(y, q)) \gg \log y.$

We claim that there can be no character $\chi_2$ modulo $\calP(y, q, \mfp_0)$
whose $L$-function has a real zero in the region
\begin{equation}\label{beta_range_2}
\beta' > 1 - \frac{C_3}{2 \log(|\Delta_K| \N \calP(y, q, \mfp_0))}.
\end{equation}
Assuming this for now, we see that $\calP(y, q, \mfp_0)$ satisfies 
Hypothesis $ZF(C_2)$ with
with $C_2 := C_3/4$, provided that $y$ is large
enough so that $\N \calP(y, q, \mfp_0) \geq |\Delta_K|$.
To prove our claim, suppose such a $\chi_2$ exists.
Then $\beta'$ will be in the region (\ref{beta_range}), and as $\chi_2$ and
$\chi'_1$ induce different characters modulo $\calP'(y, q)$, $\beta$
and $\beta'$ will be zeroes to distinct $L$-functions modulo $\calP'(y, q)$
in the region \eqref{beta_range}, contradicting Lemma
\ref{prop_fogels_zero_free}.

If no exceptional character mod $\calP'(y, q)$ exists,
we choose $\mfp_0$ to be any
prime divisor of $\calP'(y, q)$ of norm $\geq \log y$. 
We again take $C_2 := C_3/4$ and see that (for large $y$)
no $L$-function modulo $\calP(y, q, \mfp_0)$
will have a zero in the region (\ref{beta_range_2}). 

To conclude, we must show that we can find a $\calP(y, q, \mfp_0)$ in each
range $x < \N \calP(y, q, \mfp_0) \ll x \log^3 x$. In quadratic fields
there can exist at most two distinct primes of the same norm. For a fixed
large $y$, let $y' > y$ be minimal so that $\calP(y', q) \neq \calP(y, q)$.
Then $\N \calP(y', q)/\N \calP(y, q)
\leq (y')^2 = (1 + o(1)) \log^2(\N \calP(y', q))$,
so for any large $x$ we can find $y$ with
$2 x \log x < \N \calP(y, q) < 3 x \log^3 x.$ Removing a prime $\mfp_0$
from our product we see that necessarily $\N \mfp_0 \leq y
= (1 + o(1)) \log x$ and so
$x < \N \calP(y, q, \mfp_0) \ll x \log^3 x$, as desired.
\end{proof}
\subsection{Additional lemmas}\label{sec_addl_lemmas}
We need two additional lemmas on the distribution  of ideals with certain restrictions on their prime factors.

\begin{lemma}\label{lemma_1_mod_m}
Let $\mathcal{S}(x)$ denote the number of ideals of 
norm $\leq x$ whose prime (ideal) factors are all
$\equiv 1 \ (\textmod \ \mfq)$. Then
\begin{equation}\label{eqn_1_mod_m}
\mathcal{S}(x) = (C_{\mfq} + o_{\mfq}(1)) x (\log x)^{-1 + 1/\rcn},
\end{equation}
where
\begin{equation}
C_{\mfq} := \frac{1}{\Gamma(1/\rcn)} \lim_{s \rightarrow 1^+}
\biggl[ (s - 1)^{1/\rcn} \prod_{\mfp \equiv 1 \ (\textmod \ \mfq)} 
\Big(1 - \frac{1}{(\N \mfp)^{-s}}\Big)^{-1} \biggr].
\end{equation}
\end{lemma}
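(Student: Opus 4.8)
The plan is to treat $\mathcal{S}(x)$ by the Selberg--Delange method. Let $F(s) := \sum_{\mfa} a(\mfa)(\N\mfa)^{-s}$, where $a(\mfa) = 1$ if every prime ideal dividing $\mfa$ is $\equiv 1 \ (\textmod \ \mfq)$ and $a(\mfa) = 0$ otherwise; by multiplicativity this is the Euler product $F(s) = \prod_{\mfp \equiv 1 \ (\textmod \ \mfq)} (1 - (\N\mfp)^{-s})^{-1}$, absolutely convergent for $\re s > 1$. The analytic heart of the matter is that the prime ideals $\equiv 1 \ (\textmod \ \mfq)$ (in the sense of lying in the trivial class of $H^{\mfq}$) have Dirichlet density $1/\rcn$. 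Using the orthogonality relations on the characters of $H^{\mfq}$,
\[
\log F(s) = \sum_{\mfp \equiv 1 \ (\textmod \ \mfq)} (\N\mfp)^{-s} + h_1(s) = \frac{1}{\rcn} \sum_{\chi \ (\textmod \ \mfq)} \log L(s, \chi) + h_2(s),
\]
where $h_1, h_2$ are holomorphic for $\re s > 1/2$. The principal character gives $L(s, \chi_0)$, which agrees with $\zeta_K(s)$ up to finitely many Euler factors and hence has a simple pole at $s = 1$, while each nonprincipal $L(s, \chi)$ is holomorphic and nonzero at $s = 1$, and by Lemma \ref{prop_fogels_zero_free} throughout a region $\re s > 1 - c/\log(|\Delta_K|\N\mfq)$, $|t| \le 1$, with the possible exception of one real Siegel zero. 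Exponentiating, we obtain the factorization
\[
F(s) = (s - 1)^{-1/\rcn} G(s),
\]
with $G$ holomorphic and of polynomial growth, and (away from the possible Siegel zero) nonvanishing, on such a region just to the left of $\re s = 1$.

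Given this, I would invoke a standard Tauberian theorem of Landau--Selberg--Delange type (see, e.g., Tenenbaum, or Narkiewicz for the number-field setting) to conclude
\[
\mathcal{S}(x) = \big(C_{\mfq} + o_{\mfq}(1)\big) x (\log x)^{-1 + 1/\rcn}, \qquad C_{\mfq} = \frac{G(1)}{\Gamma(1/\rcn)}.
\]
Since $G(1) = \lim_{s \to 1^+} (s-1)^{1/\rcn} F(s) = \lim_{s \to 1^+} (s-1)^{1/\rcn} \prod_{\mfp \equiv 1 \ (\textmod \ \mfq)} (1 - (\N\mfp)^{-s})^{-1}$, this is exactly the constant in the statement. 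If one prefers to avoid a black-box Tauberian input, the same conclusion follows from Perron's formula $\mathcal{S}(x) = \frac{1}{2\pi i}\int F(s) x^s s^{-1}\, ds$ by deforming the contour into a Hankel-type loop around the branch point $s = 1$, using the holomorphy of $G$; the factor $\Gamma(1/\rcn)^{-1}$ then materializes from the classical Hankel integral representation of $1/\Gamma$.

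The only real obstacle is the bookkeeping in the first step: establishing the continuation and growth bounds for $G(s)$ slightly to the left of $\re s = 1$ with enough care to justify the contour shift, and dealing with the possible exceptional (Siegel) zero of a Hecke $L$-function modulo $\mfq$. Because the lemma allows both the error $o_{\mfq}(1)$ and the constant $C_{\mfq}$ to depend on $\mfq$ and $K$, the Siegel zero need not be circumvented --- it can simply be absorbed into those dependencies. This is presumably why the authors note that controlling the $K$-dependence here, while possible, would be more awkward than in Proposition \ref{prop_zero_free}.
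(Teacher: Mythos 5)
Your reduction is the same as the paper's: write $F(s)=\prod_{\mfp\equiv 1\,(\textmod\,\mfq)}(1-(\N\mfp)^{-s})^{-1}$, compare $\log F$ to $\frac{1}{\rcn}\sum_{\chi\,(\textmod\,\mfq)}\log L(s,\chi)$ via orthogonality (the paper phrases this by showing $\Theta(s)=\prod_\chi L(s,\chi)/F(s)^{\rcn}$ is holomorphic for $\re s>\frac12$, which is the same computation), extract the branch singularity $(s-1)^{-1/\rcn}$, and finish with a Tauberian theorem. The one genuine divergence is your choice of Tauberian input. You propose Selberg--Delange or a Perron integral pushed through a Hankel contour, both of which require analytic continuation, nonvanishing, and growth control in a region strictly left of $\re s=1$, which is why you end up having to discuss the possible Siegel zero and argue it can be absorbed into $o_{\mfq}(1)$. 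The paper instead cites Raikov's Tauberian theorem (Cojocaru--Murty, Theorem 2.4.1), which only requires that $F(s)(s-1)^{1/\rcn}$ be holomorphic and nonzero on the \emph{closed} half-plane $\re s\ge 1$. Since the relevant facts there are just the simple pole of $L(s,\chi_0)$ at $s=1$ and the classical nonvanishing of Hecke $L$-functions on $\re s=1$, no zero-free region and no Siegel-zero discussion is needed at all, and the constant $C_{\mfq}=H(1)/\Gamma(1/\rcn)$ falls out directly. So your argument is correct, and your Siegel-zero remark is sound in spirit, but that whole issue evaporates with the lighter Tauberian theorem; the zero-free-region machinery in this paper is reserved for the quantitative prime-counting statements (Theorem \ref{pnt_ap_good}, Proposition \ref{prop_zero_free}), not for this lemma.
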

\begin{proof}
This is a generalization of Landau's work on sums of two squares, and
also of Lemma 3 of \cite{shiu}. Write
\begin{equation}
F(s) := \prod_{\mfp \equiv 1 \ (\textmod \ \mfq)} \Big(1 - \frac{1}{(\N \mfp)^{-s}}\Big)^{-1}.
\end{equation}
Then by a Tauberian theorem due to Raikov (\cite{CM}, Theorem 2.4.1),
the asymptotic (\ref{eqn_1_mod_m}) follows if we can write
$$F(s) = \frac{H(s)}{(s - 1)^{1/\rcn}}$$
for a function $H(s)$ which is holomorphic and nonzero in the region
$\Re(s) \geq 1$, with
$$C_{\mfq} = \frac{H(1)}{\Gamma(1/\rcn)}.$$
We write
\begin{equation}
\Theta(s) := \frac{\prod_{\chi \ (\textmod \ \mfq)} L(s, \chi)}{F(s)^{\rcn}},
\end{equation}
and computing the Dirichlet series expansion for $\log \Theta(s)$ 
(exactly as in \cite{shiu}) 
we conclude that $\Theta(s)$ is holomorphic for $\Re(s) > \frac{1}{2}$.
The product $\prod_{\chi  \ (\textmod  \ \mfq)} L(s, \chi)$ has a simple pole at $s = 1$, and is otherwise
holomorphic and nonzero in $\Re(s) \geq 1$. The result follows.
\end{proof}
We now need a result from the theory of `smooth' numbers, i.e., numbers
whose prime factors are all sufficiently small. (See, for example,
Chapter III.5 of Tenenbaum's book \cite{tenenbaum} for a general introduction
to the theory.)  Here we require a result
for `smooth' algebraic integers in $K$.

\begin{lemma}\label{lemma_Psi}
Let $\Psi_K(x, y)$ be the number of ideals of norm $< x$ which are composed only
of primes with norm $< y$, and write $u := \log x / \log y$. Then for
$1 \leq u \leq \exp(c (\log y)^{3/5 - \epsilon})$ (for a certain constant $c$) we have 
\begin{equation}\label{eqn_Psi}
\Psi_K(x, y) \ll_K x \log^2 y \exp(-u(\log u + \log \log u + O(1))).
\end{equation}
\end{lemma}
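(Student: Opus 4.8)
The plan is to reduce the counting of smooth ideals in $K$ to the classical estimate for smooth integers, using the fact that the Dedekind zeta function $\zeta_K(s)$ factors (locally) in a way that is well-controlled. Concretely, I would compare $\Psi_K(x,y)$ with $\Psi(x,y)$, the number of rational integers $\leq x$ that are $y$-smooth. The key observation is that the generating Dirichlet series $\sum_{\mfa \text{ $y$-smooth}} (\N\mfa)^{-s} = \prod_{\N\mfp \leq y}(1 - (\N\mfp)^{-s})^{-1}$ differs from $\prod_{p \leq y}(1-p^{-s})^{-1}$ only by bounded local factors: each rational prime $p$ contributes at most two prime ideals above it, each of norm $p$ or $p^2$, so the Euler factor at $p$ on the $K$-side is dominated by $(1-p^{-s})^{-2}$. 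Hence $\sum_{\mfa \text{ $y$-smooth}}(\N\mfa)^{-s}$ is majorized coefficientwise by $\zeta(s)\prod_{p\leq y}(1-p^{-s})^{-1}$, which is the Dirichlet series of $\sum_{n} \mathbf{1}[n_1 \text{ $y$-smooth}]$ convolved with $1$; unwinding, the number of $y$-smooth ideals of norm $\leq x$ is $\ll \sum_{d \leq x} \Psi(x/d, y) \cdot r(d)$ with $r$ a suitable nonnegative function whose Dirichlet series is $\zeta(s)$, i.e. $r \equiv 1$, giving $\Psi_K(x,y) \ll \sum_{d\leq x}\Psi(x/d,y)$. This crude bound is too lossy, so instead I would split off the smooth part cleanly: write each ideal as (product of degree-one primes with norm a rational prime) times (product of primes of norm $p^2$), bound the second factor's contribution by a convergent-type sum $\ll \log^2 y$ (the number of ideals built from inert/ramified primes up to $y$ of norm $\leq x$ is small), and for the first factor invoke the rational smooth-number estimate.

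The cleanest route, and the one I would actually write up, is via a Rankin-type (Dirichlet series) bound rather than a combinatorial convolution. For any $\sigma \in (0,1)$,
\begin{equation*}
\Psi_K(x,y) \leq x^{\sigma} \prod_{\N\mfp \leq y}\bigl(1 - (\N\mfp)^{-\sigma}\bigr)^{-1} \leq x^{\sigma}\prod_{p \leq y}\bigl(1 - p^{-\sigma}\bigr)^{-2},
\end{equation*}
using the two-primes-above-$p$ bound and $(\N\mfp)^{-\sigma} \leq p^{-\sigma}$ for $\N\mfp \geq p$. Now $\prod_{p\leq y}(1-p^{-\sigma})^{-2} = \bigl(\prod_{p\leq y}(1-p^{-\sigma})^{-1}\bigr)^2$, and the inner product is exactly the quantity that appears in the classical Rankin bound for $\Psi(x,y)$. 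Choosing the optimal $\sigma = \sigma(x,y)$ — the same choice as in Tenenbaum, namely $\sigma = 1 - \xi(u)/\log y$ with $\xi(u) \sim \log(u\log u)$ — the classical analysis gives $\prod_{p\leq y}(1-p^{-\sigma})^{-1} \ll (\log y)\exp\bigl(\tfrac12 u(\log u + \log\log u + O(1))\cdot \tfrac{?}{}\bigr)$; I need to track that squaring the product squares the $\log y$ factor (harmless, it becomes $\log^2 y$) but the exponential term $x^{\sigma}\prod(1-p^{-\sigma})^{-1}$ already packages the $\exp(-u(\log u + \log\log u + O(1)))$ saving, and squaring only the Euler product (not the $x^{\sigma}$) changes the $O(1)$ inside. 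This is exactly why the stated bound has a $\log^2 y$ instead of $\log y$ and an unspecified $O(1)$: the $K$-dependence and the doubled Euler factor get absorbed there. The range $1 \leq u \leq \exp(c(\log y)^{3/5-\epsilon})$ is precisely the range in which the Rankin bound with this $\sigma$ is valid and sharp (it matches the de la Vallée-Poussin–type zero-free region input, or more elementarily the range where $\xi(u)$ is a good approximation).

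So the steps, in order: (1) establish the local bound — at most two primes of $\calO_K$ lie above each rational prime $p$, each with norm $\geq p$ — hence coefficientwise domination of the smooth-ideal Dirichlet series by $\bigl(\prod_{p\leq y}(1-p^{-s})^{-1}\bigr)^2$; (2) apply Rankin's trick to get $\Psi_K(x,y) \leq x^{\sigma}\bigl(\prod_{p\leq y}(1-p^{-\sigma})^{-1}\bigr)^2$ for all $\sigma \in (0,1)$; (3) insert the classical optimal choice of $\sigma$ and quote Tenenbaum's (or Hildebrand–Tenenbaum's) estimate for $x^{\sigma}\prod_{p\leq y}(1-p^{-\sigma})^{-1}$ in the stated $u$-range; (4) bookkeep the effect of squaring the Euler product — it replaces $\log y$ by $\log^2 y$ and alters the $O(1)$ in the exponent, which is all that is claimed. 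The main obstacle is Step (3)–(4): I must be careful that squaring only the Euler product and not the $x^{\sigma}$ factor does not secretly double the main exponential saving $-u\log u$; the point is that in the Rankin bound the saving comes from the competition between $x^{\sigma} = \exp(\sigma\log x)$ decreasing and $\prod(1-p^{-\sigma})^{-1}$ increasing as $\sigma \downarrow$, and doubling the increasing factor merely shifts the optimal $\sigma$ slightly and worsens the constant by a bounded amount — it does not change the leading $u(\log u + \log\log u)$ term, only the $O(1)$. Making this quantitative, rather than hand-wavy, is the one place real care is needed; everything else is either elementary algebraic number theory or a direct citation of \cite{tenenbaum}.
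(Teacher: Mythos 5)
Your proposal is correct but takes a genuinely different route from the paper's, which disposes of the lemma in a few lines by citation: de Bruijn \cite{deB} supplies \eqref{eqn_Psi} for $K = \mathbb{Q}$, Krause \cite{krause} gives an asymptotic $\Psi_K(x,y)/\Psi(x,y) \to \res_{s=1}\zeta_K(s)$ valid in the stated $u$-range, and the two are combined directly. Your route is instead a self-contained Rankin-type argument. The domination inequality $\prod_{\N\mfp\leq y}\bigl(1-(\N\mfp)^{-\sigma}\bigr)^{-1} \leq \prod_{p\leq y}\bigl(1-p^{-\sigma}\bigr)^{-2}$ is correct, since split primes contribute $\bigl(1-p^{-\sigma}\bigr)^{-2}$ exactly, inert primes contribute $\bigl(1-p^{-2\sigma}\bigr)^{-1} \leq \bigl(1-p^{-\sigma}\bigr)^{-1}$, and ramified primes contribute $\bigl(1-p^{-\sigma}\bigr)^{-1}$; and the worry you flag in Steps (3)--(4) does resolve in your favor. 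With the unoptimized classical choice $\sigma = 1 - \xi(u)/\log y$, where $e^{\xi} = 1 + u\xi$ so that $\xi(u) = \log u + \log\log u + O(1)$, one has $x^{\sigma} = x\exp(-u\xi(u))$ and $\prod_{p\leq y}\bigl(1-p^{-\sigma}\bigr)^{-1} \ll (\log y)\exp\bigl(u(1+o(1))\bigr)$ via the standard partial-summation estimate of $\sum_{p\leq y}p^{-\sigma}$, so squaring only the Euler product contributes $\log^2 y \cdot e^{O(u)}$ while $-u\xi(u) = -u\log u - u\log\log u + O(u)$ is untouched; the doubled factor is entirely absorbed into the $O(u)$ that the statement permits. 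What your method buys is transparency and portability: it uses nothing beyond the elementary local bound on primes above $p$ and the classical Rankin estimate, and it generalizes at once to a degree-$n$ field with $\log^n y$ in place of $\log^2 y$. What the paper's method buys is brevity and the availability of a genuine asymptotic rather than a one-sided bound, at the cost of importing Krause's deeper result.
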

\begin{proof}
This follows immediately by comparing results of de Bruijn \cite{deB} and
Krause \cite{krause}.
de Bruijn proved (\ref{eqn_Psi}) for $K = \mathbb{Q}$.
For general $K$, Krause proved an asymptotic formula for $\Psi_K(x, y)$ in terms
of the Dickman function, and Krause's result implies in particular that for $u$
in the range specified,
$$\lim_{x, y \rightarrow \infty} \frac{\Psi_K(x, y)}{\Psi(x, y)} = \res_{s = 1}
\zeta_K(s),$$ where $\zeta_K(s)$ denotes the Dedekind zeta function.
The lemma then follows immediately.
\end{proof}

\section{Bubbles of Good and Bad Points}\label{sec_geometry}

Suppose we are given a set of integers, $g$ of which are ``good'' and $b$ of which
are ``bad''. Trivially, this set contains a string of $\gg g/b$ consecutive good integers.
In this section we prove a two-dimensional analogue of this statement.

We formlulate our result as a general proposition in combinatorial geometry.
Suppose some circle in the plane contains (in its interior) $g$ ``good'' points and $b$ ``bad'' points. 
(In our application, these will be 
prime elements of $\calO_K$ congruent and not congruent to $ua \ (\textmod \ q)$, respectively.)
We
would like to find a smaller circle containing $\gg g/b$ good points and no bad ones. We must find
this entirely within the original circle, as there may be additional bad points outside this circle
which we have not counted.

This is too much to ask for in general; for example, we cannot find such a smaller circle 
if the good points are all close to the boundary and the bad points are spread evenly.
To avoid such counterexamples, we count good and bad points in concentric circles
as follows:

\begin{proposition}\label{good_bad_points}Suppose the plane contains some number of ``good'' and ``bad'' points, that the
unit circle contains $g$ good points, and that the circle $|z| < 3$ contains $b$ bad points. 
Then there exists some circle in the plane containing 
$> g/(2b + 12)$ good points and no bad points.
\end{proposition}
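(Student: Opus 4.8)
The plan is to dyadically scan over concentric circles centered at the origin and use a pigeonhole/averaging argument on the bad points to locate an annulus where the good points outnumber the bad points in a controlled way. Concretely, I would consider the radii $r_j = 2^{-j}$ for $j = 0, 1, 2, \dots$, and look at the annular shells $A_j := \{ 2^{-(j+1)} \le |z| < 2^{-j} \}$, all contained in the unit disk, hence all inside $|z| < 3$. Let $g_j$ be the number of good points in $A_j$ and $b_j$ the number of bad points in $A_j$. Since $\sum_j g_j = g$ and $\sum_j b_j \le b$, I want to find a shell $A_j$ with $g_j$ reasonably large compared to the total bad count $b$ — but a single shell need not contain a circle avoiding all bad points, so the shell itself has to be subdivided.

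The key step is the following two-part argument. First, by averaging there is a shell $A_j$ with $g_j \ge $ (something like) a positive proportion of $g$ weighted against how many shells carry bad points; more carefully, I would argue that the shells containing at least one bad point number at most $b$, so throwing those away (and the finitely many with no good points is harmless) still leaves shells carrying, in total, all $g$ good points except those in shells with bad points. If a bad-free shell carries $> g/(2b+12)$ good points we are almost done; if not, then too much of the good mass sits in shells that also contain bad points, and I instead pass to \emph{one} such shell $A_j$ with $g_j$ large. Within a fixed annulus $A_j$ of aspect ratio $2$, a circle of radius comparable to the shell width can be swept: cover $A_j$ by $O(1)$ congruent small disks (a bounded number, say $N_0$, of disks of radius $\tfrac14 \cdot 2^{-(j+1)}$ suffices to cover the annulus and each such disk lies inside $|z|<3$ once recentered), and distribute the $b_j$ bad points among them; a disk free of bad points exists as soon as $b_j < N_0$, and in general at most $b_j$ of the $N_0$ disks are spoiled, so some surviving disk holds $\ge g_j/N_0$ good points and no bad ones. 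Combining the two cases and optimizing the bookkeeping constants yields the bound $> g/(2b+12)$, with the $12$ absorbing the constant $N_0$ and the geometric-series losses.

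The main obstacle I anticipate is purely the constant-chasing: making the covering of the annulus explicit enough that $N_0$ is small, and balancing it against the dyadic sum so that the final denominator is exactly $2b + 12$ rather than some larger $Cb + C'$. One has to be careful that the small disks used inside a shell are genuinely contained in the plane region we are allowed to use (here that is automatic since everything sits inside the unit disk, well within $|z|<3$ — the role of the radius-$3$ hypothesis is only to guarantee that \emph{all} bad points that could ever intrude on our candidate circles have already been counted in $b$). A secondary subtlety is handling shells with no good points and the infinite tail $j \to \infty$: since good points are isolated there are only finitely many nonempty shells, so these issues are vacuous, but it is worth a sentence. Everything else — the averaging, the pigeonhole over $O(1)$ sub-disks — is routine.
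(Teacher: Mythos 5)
Your proposal takes a genuinely different route from the paper (which uses a Delaunay triangulation of the bad points augmented by a $7$-gon at radius $2$), but it has a fatal gap in the pigeonhole-over-small-disks step, and I don't see how to repair it without importing the paper's key idea.

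The problem is this. You cover a shell $A_j$ by $N_0$ small disks, note that at most $b_j$ of them are ``spoiled'' (contain a bad point), and conclude that ``some surviving disk holds $\geq g_j/N_0$ good points and no bad ones.'' This does not follow. The $g_j$ good points are certainly distributed among the $N_0$ covering disks, but there is nothing forcing any of them to lie in an \emph{unspoiled} disk: in the worst case every good point sits in a disk that also contains a bad point, and the surviving disks contain zero good points. The step that is actually needed here is a collection of disks, \emph{each individually free of bad points}, that together cover the region of interest; once one has such a collection of size $O(b)$, the pigeonhole is trivial. That covering is exactly what the Delaunay triangulation supplies in the paper: the circumcircle of every Delaunay triangle is empty of the input points by definition, the circumcircles of the $\leq 2(b+7)-2$ triangles cover the unit disk, and the auxiliary $7$-gon guarantees that every circumcircle meeting the unit disk stays inside $|z|<3$ (so you never accidentally sweep in an uncounted bad point from outside). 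Nothing in your dyadic construction produces such a family, and without it the argument breaks.

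A secondary problem is the constant. Even if the pigeonhole were valid, an annulus of inner radius $r$ and outer radius $2r$ has area $3\pi r^2$; covering it by disks of radius $r/4$ requires at least $48$ disks by area alone, and a realistic covering needs more. That alone already overshoots $2b+12$ at $b=0$, so the $12$ cannot ``absorb $N_0$'' as you hope. Your intuition about the role of the $|z|<3$ hypothesis (that all bad points that could intrude have been counted) is correct, but that is the easy part; the hard part is building a bad-point-free covering of bounded cardinality, and that is precisely where the Delaunay triangulation --- or some equivalent ``empty circle'' structure --- is indispensable.
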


\begin{remark} In our application to the proof of Theorem \ref{thm_shiu},
$b$ and $g$ will be large with $b = o(g)$.
The construction will be scaled and translated to appropriate regions of
the complex plane.
\end{remark}

For the proof we require the following geometric construction:
\begin{lemma}\label{lem_delaunay}
Let $\mathcal{P}$ be a set of points $N$ in the plane, not all collinear. Then 
there exists a triangulation (called a {\itshape Delaunay triangulation})
of $\mathcal{P}$, such that no point of $\mathcal{P}$ is inside the
circumcircle of any triangle. This triangulation consists of $2N - 2 - k$ triangles, where
$k$ is the number of points in $\mathcal{P}$ lying on the boundary of the convex
hull of $\mathcal{P}$.
\end{lemma}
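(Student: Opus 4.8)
The plan is to construct the triangulation via the classical \emph{paraboloid lifting map} and then count its faces with Euler's formula; the whole statement is standard in computational geometry, but a self-contained sketch runs as follows.

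For existence, lift each point $p = (x, y) \in \mathcal{P}$ to $\hat{p} := (x, y, x^2 + y^2) \in \R^3$ and set $\widehat{\mathcal{P}} := \{\hat{p} : p \in \mathcal{P}\}$. Since $\mathcal{P}$ is not collinear, $\operatorname{conv}(\widehat{\mathcal{P}})$ is full-dimensional; consider its \emph{lower hull}, the union of those facets whose outward normal has negative $z$-coordinate. Projecting these facets to the $(x,y)$-plane, and triangulating any non-triangular facet by an arbitrary choice of diagonals, yields a triangulation $\mathcal{T}$ of $\operatorname{conv}(\mathcal{P})$. The crucial elementary fact is that, for any four points $q, p_1, p_2, p_3$ in the plane, $q$ lies \emph{strictly} inside the circumcircle of $p_1 p_2 p_3$ exactly when $\hat{q}$ lies strictly below the plane through $\hat{p}_1, \hat{p}_2, \hat{p}_3$: the paraboloid $z = x^2 + y^2$ cuts any non-vertical plane in the lift of a circle, and dips below that plane precisely over the disk the circle bounds. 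Since every point of $\widehat{\mathcal{P}}$ lies on or above each facet of the lower hull, no point of $\mathcal{P}$ lies strictly inside the circumcircle of any triangle of $\mathcal{T}$ --- the Delaunay property. Because the paraboloid is \emph{strictly} convex, the lifts of all $N$ points are actual vertices of the lower hull (even a point interior to an edge of $\operatorname{conv}(\mathcal{P})$ lifts strictly below that edge), so $\mathcal{T}$ has precisely the $N$ points of $\mathcal{P}$ as vertices.

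For the count, apply Euler's formula to the planar graph of $\mathcal{T}$: with $N$, $E$, $F$ the numbers of vertices, edges and faces (counting the unbounded face), $N - E + F = 2$. Every bounded face is a triangle, and the unbounded face is bounded by the $k$ edges of the boundary of $\operatorname{conv}(\mathcal{P})$, whose vertices are exactly the $k$ points of $\mathcal{P}$ on that boundary (collinear ones included). Double-counting edge-face incidences gives $3(F - 1) + k = 2E$, and eliminating $E$ between this and Euler's relation yields $F - 1 = 2N - 2 - k$ triangles.

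The real work is bookkeeping around degeneracies rather than any single hard step: concyclic quadruples make the lower hull non-simplicial, but the arbitrary triangulation of such a facet still satisfies the (strict) empty-circumcircle condition; and collinear points on the hull boundary must be counted among both the $N$ vertices and the $k$ boundary vertices, which is consistent precisely because strict convexity of the lifting paraboloid forces such points to be vertices of $\mathcal{T}$. (Alternatively, existence follows from the edge-flipping algorithm --- start from any triangulation with vertex set $\mathcal{P}$ and flip ``illegal'' edges until none remain --- whose termination and correctness are again read off from the lifting map, since a flip strictly lowers the lifted piecewise-linear surface.)
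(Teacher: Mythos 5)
The paper does not prove this lemma at all; it is a standard fact in computational geometry, and the text simply refers the reader to Chapter~9 of the cited book of de~Berg, van~Kreveld, Overmars, and Schwarzkopf. Your paraboloid-lifting argument for existence and your Euler-formula count are precisely the textbook route (indeed the route taken in that reference), so you have supplied a proof where the paper defers to the literature. The argument is correct: the empty-circumcircle property is exactly the statement that each lifted point lies on or above every supporting plane of a lower-hull facet, strict convexity of the paraboloid guarantees that every point of $\mathcal{P}$ (including ones lying in the relative interior of a hull edge) is a vertex of the triangulation, and the face count $F - 1 = 2N - 2 - k$ follows from $N - E + F = 2$ together with $3(F-1) + k = 2E$. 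Two small points worth tightening if you wrote this out in full: the claim that $\operatorname{conv}(\widehat{\mathcal{P}})$ is full-dimensional fails when $N = 3$ or when all points are concyclic (the lifted points are then coplanar), though in those degenerate cases the lower hull is the whole planar hull and the rest of the argument goes through unchanged; and for concyclic facets you correctly note that an arbitrary triangulation still has no point \emph{strictly} inside any circumcircle, which is exactly what the lemma asserts.
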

See, e.g., Chapter 9 of \cite{comp_geom} for a proof of this. Observe also that if
all points of $\mathcal{P}$ are collinear, then Proposition \ref{good_bad_points}
is trivial.

\begin{proof}[Proof of Proposition \ref{good_bad_points}]

The proof is by geometric construction. Write $V$ for the set
of all bad points of distance
less than 3 from the origin, together with the 7-gon consisting of the points 
points $2 e^{2 \pi i n/7}$, for $n \in \Z$.

Construct the Delaunay triangulation $T$ of $V$, let $\mathcal{C}$ be the set of
circumcircles of all triangles in $T$, and let $\mathcal{C}' \subseteq \mathcal{C}$ 
be those circles which nontrivially intersect the interior of the unit circle. By construction,
no circle in $\mathcal{C'}$ contains any point of $V$, and the circles in $\mathcal{C'}$
cover the interior of $C(1)$, with the exception of any bad points.

We claim that every circle in $\mathcal{C'}$ is contained inside $\{ |z| = 3 \}.$
Supposing this for now,
we know that the circles in $\mathcal{C'}$ do not contain any bad points, including
any which lie on or outside $\{ |z| = 3 \}$. These circles do contain all
of the good points in the unit circle, and it follows that one such circle
contains $\geq g/|\mathcal{C}'|$ good
points. Lemma \ref{lem_delaunay} implies that
$|\mathcal{C}'| < 2(b + 7) - 2$ as required.

To prove that every circle in $\mathcal{C'}$ is contained inside $\{ |z| = 3 \},$
suppose that $C$ is a counterexample. Then
$C$ or its interior will contain points $P_1$ and $P_3$ with $|P_1| = 1$ and $|P_3| = 3$.
Furthermore, we 
may take these points to be on the ray from the origin going through the center of $C$.
We also
easily check that $C$ must contain the circle having $\overline{P_1 P_3}$ as its diameter.

For some point $Q$ of the 7-gon, the angle between $\overrightarrow{O Q}$
and $\overrightarrow{O P_3}$ is at most $\pi/7$ and in particular is less than $\pi/6$.
We check that the distance between $Q$ and the midpoint of $\overline{P_1 P_3}$ is then
less than 1, which implies that $Q$ is contained in the interior of $C$, our contradiction.

\end{proof}

\begin{remark}
We thank Bob Hough, who suggested an improvement which improved the statement of
Proposition \ref{good_bad_points} and simplified its proof.
\end{remark}

\section{Proof of Theorem \ref{thm_shiu}}\label{sec_proof_shiu}
We fix $K$, $\mfq = (q)$, and $a$;
we assume that the units of $\calO_K$
do not represent all the reduced residue classes modulo $\mfq$, and that
the residue classes represented are all distinct. Except where noted,
implied constants in our analysis do not depend on $\mfq$. 
We assume a sufficiently large (in terms of $\mfq$ and $K$) integer $x$ is given,
and choose $D > 3$ such that the term $o_{x, D}(1)$ of Theorem \ref{pnt_ap_good}
is bounded by $\frac{1}{2}$.

Our proof consists of three steps. In the first step, we find a modulus $Q$ such that
any $b \in \calO_K$ of small norm which is coprime to $Q$ is very likely to be
congruent to $ua \ (\textmod \ Q)$. In the second step, we use this $Q$ to construct a
Maier matrix of elements of $\calO_K$, such that nearly all of the primes in the matrix
are $\equiv ua \ (\textmod \ Q)$. In the final step, we argue that this Maier matrix contains
a bubble of congruent primes, and bound its size from below.
\\
\\
{\bf The modulus $Q$.} We use Proposition \ref{prop_zero_free} to choose
$y$ and $\mfp_0$ such that
$$x^{1/D} < \N \calP(y, q, \mfp_0) \ll x^{1/D} \log x$$
and such that $\calP(y, q, \mfp_0)$ satisfies Hypothesis $ZF(C_2)$.
We introduce variables $z < y$ and $t < (yz)^{1/2}$ with $z = o(t)$,
and define a set of primes $\mathcal{P}$ as follows:
If $a$ is not congruent to a unit modulo $\mfq$, we define
\begin{equation}
\mathcal{P} := \left\{
\begin{array} {l l} 
\{\mfp: \N \mfp \leq y, \mfp \neq \mfp_0, \mfp \not \equiv 1, a \mod \mfq) \} \\
\ \cup \ \{\mfp: t \leq \N \mfp \leq y, \mfp \neq \mfp_0, \mfp \equiv 1 \mod \mfq \} \\
\ \cup \ \{\mfp: \N \mfp \leq yz/t, \mfp \neq \mfp_0, \mfp \equiv a \mod \mfq \}.
\end{array} \right.
\end{equation}
If $a$ is congruent to a unit modulo $\mfq$, we define instead
\begin{equation}\label{def_P_2}
\mathcal{P} := \left\{
\begin{array} {l l} 
\{\mfp: \N \mfp \leq y, \mfp \neq \mfp_0, \mfp \not \equiv 1 \mod \mfq \} \\
\ \cup \ \{\mfp: t \leq \N \mfp \leq yz/t, \mfp \neq \mfp_0, \mfp \equiv 1 \mod \mfq \}.
\end{array} \right.
\end{equation}
We recall our convention that a nonprincipal prime ideal $\mfp$ is not $\equiv a \ (\textmod \ \mfq)$
for any $\mfq$.

The latter definition (\ref{def_P_2}) is motivated by simplicity, as it allows us
to treat both cases simultaneously. Following Shiu \cite{shiu}, it should
be possible to define $\mathcal{P}$ differently in this case, and modestly
improve our result for a certain subset of moduli $a$.

We further define
\begin{equation}
\mathfrak{Q} = (Q) := \mfq \prod_{\substack{\mfp \in \mathcal{P} \\ \mfp \neq \mfp_1}} \mfp,
\end{equation}
where $\mfp_1$ is any prime ideal with $\log y < \N \mfp_1 \leq y$ for which
$\mathfrak{Q}$ is principal. We may then write $Q$ for any generator
of $\mathfrak{Q}$.

We see that $\mathfrak{Q} | \calP(y, q, \mfp_0)$ and 
$\log(\N Q) \geq \frac{1}{3} \log(\N \calP(y, q, \mfp_0))$.  
Proposition \ref{prop_zero_free} thus implies that the Hecke $L$-functions
modulo $\mathfrak{Q}$ have no zeroes in the
region
\begin{equation}
1 \geq \Re s > 1 - \frac{C_2}{3 \log[(\N Q)(|t| + 1)]},
\end{equation}
as any such zeroes would induce zeroes of
$L$-functions modulo $\calP(y, q, \mfp_0)$ at the same point,
contrary to Hypothesis $ZF(C_2)$ for $\mathcal{P}(y, q, \mfp_0)$.
In other words $Q$ satisfies Hypothesis $ZF(C)$ with 
with $C := C_2/3$,
so that the primes are well-distributed (i.e., Theorem \ref{pnt_ap_good} holds)
in arithmetic progressions modulo $Q$.
\\
\\
{\bf Construction of the Maier matrix.} 
Our construction adapts that of Shiu. In our
case, the geometrical argument given in Section \ref{sec_geometry}
requires us to keep track of more ``bad" primes than ``good". Thus
we define ``bubbles" $B$ and $B'$ consisting of those elements of $\calO_K$
whose norm is less than $yz$ and $9yz$, respectively. We further define
Maier matrices $M$ and $M'$, with $(i, b)$ entry equal to 
the algebraic integer $i Q + b$, where $i$ ranges over all elements of $\calO_K$
with norm in $(\N Q^{D - 1}, 2\N Q^{D - 1})$, and $b$ ranges over
elements of $B$ and $B'$ respectively. We regard $M$ naturally as a submatrix
of $M'$.

We define sets
\begin{equation}
S := \{b \in B; (b, Q) = 1; b \equiv ua \mod q {\text{ for some }} u \in \calO_K^{\times}\}
\end{equation}
and
\begin{equation}
T := \{b \in B'; (b, Q) = 1; b \not \equiv ua \mod q {\text{ for any }} u \in \calO_K^{\times}\}.
\end{equation}
We will prove that $S$ is much larger than $T$.

To estimate $S$, we observe that most elements of $S$ are uniquely determined as elements
of the form $p n$,
where $p$ is a prime of norm $> yz / t$ and is congruent to $ua$ for some unit $u$,
and $n$ is a product of primes congruent to 1 modulo $q$. (There will also be
multiples of $\mfp_0$ and $\mfp_1$, which we ignore.) 
Subdividing dyadically, we see that
$$
|S| \geq \sum_{i = 0}^{\lfloor \frac{\log t}{\log 2} \rfloor - 2}
\Big(\pi_1(2^{i + 1} yz/t; q, ua) - \pi_1(2^i yz/t; q, ua)\Big)
\calS(t/2^{i + 1})
$$
$$
\gg \frac{C_{\mfq}}{\rcn} \sum_{i = 0}^{\lfloor \frac{\log t}{\log 2} \rfloor - i_0} \bigg( \frac{ yz 2^i}{ t \log y} \bigg)
\cdot \frac{t}{2^{i + 1}} \log(t/2^{i + 1})^{-1 + 1/\rcn}.
$$
Here $i_0$ is a constant, depending on $q$, such that
Lemma \ref{lemma_1_mod_m} gives an asymptotic estimate for $x \gg 2^{i_0}$.
We now simplify and approximate the sum by the corresponding integral,
and conclude that
\begin{equation}\label{approx_with_integral}
|S| \gg \frac{C_{\mfq} yz}{\rcn \log y} \int_0^{\frac{\log t}{\log 2} - i_0}
\big( \log t - s \log 2 \big)^{-1 + 1/\rcn} ds
\end{equation}
$$= \frac{C_{\mfq} yz}{(\log 2)(\log y)} \bigg( (\log t)^{1/\rcn} - 
(i_0 \log 2)^{1/\rcn} \bigg)
\gg \frac{C_{\mfq} yz}{\log y} (\log t)^{1/\rcn}.$$

Elements of $T$ come in three types: multiples of $\mfp_0$ and $\mfp_1$, multiples of a prime
of norm greater than $y$, or products of a unit and elements whose norms are less than $t$
and are congruent to 1 modulo $q$. We write $T', T'', T'''$ for these 
subsets of $T$ respectively and we will estimate each in turn.
We have $|T'| \ll yz/\log y$ because $\N \mfp_0, \N \mfp_1 \gg \log y$.
For $T''$, we have that
$$|T''| \leq \sum_{i = 0}^{\lceil \frac{\log (9z)}{\log 2} \rceil - i_0}
\Big(\pi_1(2^{i + 1} y) - \pi_1(2^i y)\Big)
\calS(9z/2^i) + \Big( \pi_1(9yz) - \pi_1(yz/2^{i_0})\Big) \calS(9 \cdot 2^{i_0}).
$$
$$
\ll \sum_{i = 0}^{\lceil \frac{\log (9z)}{\log 2} \rceil - i_0}
\bigg( \frac{ 2^i \omega y}{h_K \log y} \bigg)
\cdot \frac{C_{\mfq} z}{2^i} (\log(9z/2^i))^{-1 + 1/\rcn}
+ O_q\Big( \frac{yz}{\log y} \Big).
$$
In the above, $\pi_1(x)$ counts the number of principal prime ideals of norm $\leq x$.
Estimating in the same way as in (\ref{approx_with_integral}),
we conclude that
$$
|T''| \ll C_{\mfq} \phi(q) \frac{yz (\log z)^{1/\rcn}}{\log y}.
$$
To count elements $T'''$ we apply Lemma \ref{lemma_Psi}.
We choose (as in \cite{shiu})
\begin{equation}\label{choice_t}
t = \exp\Big(\frac{\log y \log \log \log y}{4 \log \log y}\Big),
\end{equation}
and the lemma implies that
$$|T'''| = \omega \Psi(yz, t) \ll
yz (\log t)^2 \exp(-4 \log \log y + o(\log \log y)) \ll \frac{yz}{\log y}.$$
Putting these estimates
together we conclude that
\begin{equation}
|T| \ll C_{\mfq} \phi(q) \frac{yz (\log z)^{1 / \rcn}}{\log y}.
\end{equation}
If $y$ is large in terms of $K$, then the implied constant does not depend on $K$.

Write $P_1$ for the number of primes in $M$ (henceforth ``good primes'') 
congruent to $ua$ modulo $q$
for any unit $u \in \calO_K$, and write $P_2$ for the number of primes 
(``bad primes'') in $M'$ not congruent
to $ua$ for any $u$. By Theorem \ref{pnt_ap_good}, $P_1$ and $P_2$
are determined by $|S|$ and $|T|$, up to an error term which can be made
small by choosing large $x$ and $D$. We therefore conclude that 
\begin{equation}\label{eqn_p1_est}
P_1 \gg C_{\mfq} \frac{yz (\log t)^{1/\rcn}}{\log y} \frac{\N Q^D}{\phi(Q) \log (\N Q^D)}
\end{equation}
and
$$P_2 \ll C_{\mfq} \phi(q) \frac{yz (\log z)^{1/\rcn}}{\log y} \frac{\N Q^D}{\phi(Q) \log (\N Q^D)}.$$
\\
\\
{\bf Finding a bubble of congruent primes.} 
We will split into two cases and compare numbers of good and bad primes. Throughout,
we count all bad primes appearing in $M'$ (which contains $M$), but only those good primes
appearing in $M$.

In the first case the majority of good primes occur in rows
containing at least one bad prime, in which case the proportion of good to bad
primes in some such row of $M'$ is $\gg |S|/|T|$. These primes all occur in some circle
in $\mathbb{C}$ of radius $3 \sqrt{yz}$, and applying Proposition \ref{good_bad_points}
we see that this circle contains a subcircle with $\gg |S|/|T|$ good primes and
no bad primes, which is our desired bubble of congruent primes. The number of
primes in the bubble will be
$$\gg |S|/|T| \gg \frac{1}{\phi(q)}\Big(\frac{\log t}{\log z}\Big)^{1/\rcn}.$$

In the second case, the majority of good primes occur in rows not containing any
bad primes. These such rows then constitute bubbles of congruent primes of
radius $3\sqrt{yz}$, and at least one will contain $\gg P_1 / R$ primes, where $R$ denotes
the number of rows, i.e., the number of elements of $\calO_K$ with norm in
$(\N Q^{D-1}, 2 \N Q^{D - 1})$. As $\calO_K$ forms a lattice in $\mathbb{C}$
we have $R \sim C_K \N Q^{D - 1}$ for some constant
$C_K$ depending on $K$.
Using (\ref{eqn_p1_est}), we see that some row of $M$
 will be a bubble containing
$$\gg_K C_{\mfq} \frac{yz (\log t)^{1/\rcn}}{\log y} \frac{\N Q}{\phi(Q) \log (\N Q^D)}$$
primes.
Now we have
$$\log(\N Q) \ll \sum_{\N \mfp \leq y} \log (\N \mfp) \ll y,$$
and
\begin{equation}\label{ineq_mertens}
\frac{\N Q}{\phi(Q)} = \frac{\N \mfq}{\phi(\mfq)} 
\prod_{\mfp \in \calP} \Big(1 - \frac{1}{\N \mfp}\Big)^{-1}
\gg_{\mfq} \log y (\log t)^{-1/\rcn}.
\end{equation}
To prove (\ref{ineq_mertens}), one can use a result of Rosen
(Theorem 4 of \cite{rosen}, along with 
the result of Landau cited immediately afterwards). The result is then easily
proved, provided that the dependence on $\mfq$ (and $K$) is allowed.

Combining these results, we conclude that this bubble contains $\gg_{\mfq} z$ 
primes. Therefore, our argument produces a bubble
of
$$\gg \min \Big(\frac{1}{\phi(q)}\Big(\frac{\log t}{\log z}\Big)^{1/\rcn}, C'_{\mfq} z\Big)$$
congruent primes, for a constant $C'_{\mfq}$ depending on $\mfq$. Our theorem follows by choosing $z = \log \log (\N Q)$.

\end{document}